\title{One-dependent coloring by finitary factors}
\newtheorem{thm}{Theorem}
\newtheorem*{thm*}{Theorem}
\newtheorem{prop}[thm]{Proposition}
\newtheorem{lemma}[thm]{Lemma}
\newtheorem{cor}[thm]{Corollary}
\crefname{thm}{Theorem}{Theorems}
\crefname{lemma}{Lemma}{Lemmas}
\crefname{prop}{Proposition}{Propositions}
\crefname{cor}{Corollary}{Corollaries}
\crefname{section}{Section}{Sections}
\crefname{figure}{Figure}{Figures}
\newcommand{\E}{\mathbb E}
\renewcommand{\P}{\mathbb P}
\newcommand{\Z}{\mathbb Z}
\newcommand{\R}{\mathbb R}
\newcommand{\F}{\mathcal F}
\DeclareMathOperator{\id}{id}
\renewcommand{\hat}{\widehat}
\newcommand{\eqd}{\stackrel d=}
\renewcommand{\L}{\lhd}
\newcommand{\G}{\rhd}
\renewcommand{\l}{\triangleleft}
\newcommand{\g}{\triangleright}
\newcounter{mycount}
\newenvironment{ilist}{\begin{list}{\rm(\roman{mycount})}%
   {\usecounter{mycount}\labelwidth=4mm\itemsep 3pt\leftmargin=10mm}}{\end{list}}
\newcommand{\df}[1]{\textbf{\boldmath #1}}
\author[Alexander E. Holroyd]{Alexander E.\ Holroyd}
\address{Alexander E.\ Holroyd, Microsoft Research,
1 Microsoft Way, Redmond, WA 98052, USA} \email{holroyd at microsoft.com}
\urladdr{\url{http://research.microsoft.com/~holroyd/}}
\keywords{Proper coloring, one-dependence, stationary process, finitary
factor}
\subjclass[2010]{60G10; 05C15; 60C05}
\date{23 October 2014}
\begin{document}
\begin{abstract}
Holroyd and Liggett recently proved the existence of a
stationary $1$-dependent $4$-coloring of the integers,
the first stationary $k$-dependent $q$-coloring for any
$k$ and $q$.  That proof specifies a consistent family of
finite-dimensional distributions, but does not yield a
probabilistic construction on the whole integer line.
Here we prove that the process can be expressed as a
finitary factor of an i.i.d.\ process.  The factor is
described explicitly, and its coding radius obeys
power-law tail bounds.
\end{abstract}

\maketitle

\section{Introduction}

Let $X=(X_i)_{i\in\Z}$ be a stochastic process, i.e.\ a random element of
$\R^\Z$.  We call $X$ a (proper) \df{$q$-coloring} if each $X_i$ takes values
in $[q]:=\{1,\ldots,q\}$, and almost surely we have $X_i\neq X_{i+1}$ for all
$i\in\Z$.  A process $X$ is called \df{$k$-dependent} if the random vectors
$(X_i)_{i\in A}$ and $(X_i)_{i\in B}$ are independent of each other whenever
$A$ and $B$ are two subsets of $\Z$ satisfying $|a-b|>k$ for all $a\in A$ and
$b\in B$.  A process is \df{finitely dependent} if it is $k$-dependent for
some integer $k$.  A process $X$ is \df{stationary} if $(X_i)_{i\in\Z}$ and
$(X_{i+1})_{i\in\Z}$ are equal in law.  Holroyd and Liggett \cite{hl} proved
the existence of a stationary $1$-dependent $4$-coloring, and a stationary
$2$-dependent $3$-coloring.  These were the first known stationary finitely
dependent colorings.  The descriptions of the processes given in \cite{hl}
are mysterious, and involve specifying a consistent family of
finite-dimensional distributions rather than an explicit construction on
$\Z$.

A process $X$ is a \df{factor} of a process
$Y=(Y_i)_{i\in\Z}$ if it is equal in law to $F(Y)$, where
the function $F$ is translation-equivariant, i.e.\ it
commutes with the action of translations of $\Z$. A factor
of a stationary process is necessarily stationary. The
factor is \df{finitary} if for almost every $y$ (with
respect to the law of $Y$) there exists $t<\infty$ such
that whenever $y'$ agrees with $y$ on the interval
$\{-t,\ldots,t\}$, the resulting values assigned to $0$
agree, i.e.\ $F(y')_0=F(y)_0$.  In that case we write
$T(y)$ for the minimum such $t$, and we call the random
variable $T=T(Y)$ the \df{coding radius} of the factor. In
other words, in a finitary factor, the symbol $X_0$ at the
origin can be determined by examining only those variables
$Y_i$ within a random, finite, but perhaps unbounded
distance $T$ from the origin.

Our main result is that finitely dependent coloring can be done by a finitary
factor.
\begin{thm}\label{main}
There exists a $1$-dependent $4$-coloring of $\Z$ that is a finitary factor
of an i.i.d.\ process.  The coding radius $T$ satisfies
$$\P(T>t) < c \,t^{-\alpha},\qquad t\geq 1,$$
for some absolute constants $c,\alpha>0$.
\end{thm}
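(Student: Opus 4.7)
The plan is to construct the Holroyd--Liggett $1$-dependent $4$-coloring as an explicit finitary factor of an i.i.d.\ uniform $[0,1]$ sequence $(U_i)_{i\in\Z}$, via an anchor-decomposition of $\Z$ that is simulated block by block.

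First, from the explicit finite-dimensional formulas of~\cite{hl}, I would derive an inversion-sampling subroutine: given an interval $[a,b]$, boundary colors $c_a,c_b\in[4]$, and a finite list of uniforms, it returns a coloring of $[a,b]$ distributed according to the HL law conditioned on $X_a=c_a$, $X_b=c_b$. Since the relevant conditional probabilities are strictly positive on admissible configurations and explicitly computable, this can be arranged using $O(1)$ uniforms per coordinate, with no information read from outside $[a,b]$.

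Second, using disjoint coordinates of $U$, I would construct a translation-equivariant, finitary random set $B\subset\Z$ of \emph{anchors} with consecutive anchors at distance at least $2$, and at each anchor $b$ sample $X_b$ from the HL single-site marginal using a fresh uniform. By $1$-dependence, the resulting $(X_b)_{b\in B}$ has the correct joint law (a product of marginals on any $\geq 2$-separated set). On each inter-anchor segment $(b,b')$ I would invoke the Step~1 subroutine with boundary colors $X_b, X_{b'}$ and uniforms drawn from strictly inside the segment. Again by $1$-dependence, different inter-anchor segments are conditionally independent given the anchor colors, so a short consistency check shows that the assembled process realizes the HL joint law and is a finitary factor of $U$.

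Third, the coding radius $T$ at $0$ is controlled by the length of the anchor segment containing $0$, together with the local inspection radius needed to identify its flanking anchors. To match the polynomial bound $\P(T>t)<c\,t^{-\alpha}$, I would organize anchors hierarchically: scale-$n$ anchors correspond to rare local events in $U$ of probability $\sim n^{-\beta}$, and a Peierls-type/renewal argument along scales bounds the tail of the segment length through $0$. The main obstacle is the interaction between the sampler of Step~1 and the anchor hierarchy: one must arrange that sampling a segment genuinely uses only uniforms inside the segment (so that distinct segments draw on disjoint parts of $U$), while the hierarchy simultaneously delivers segment lengths with the required polynomial tail. If the HL subroutine needs recursion depth growing with segment size, keeping the overall coding radius polynomial becomes delicate, and I expect this balance to form the technical heart of the proof.
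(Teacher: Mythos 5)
The central gap is in your Step~2: the assertion that ``by $1$-dependence, different inter-anchor segments are conditionally independent given the anchor colors'' does not follow from $1$-dependence. $1$-dependence gives \emph{unconditional} independence between $X|_A$ and $X|_B$ when $A$ and $B$ are separated by a gap of size $\geq 2$; this is exactly why $(X_b)_{b\in B}$ is i.i.d.\ on a $\geq 2$-separated anchor set, as you correctly note. But once you condition on the anchor colors, the needed separation is gone. With anchors $b_1<b_2<b_3$, the left segment $(b_1,b_2)$ reaches up to $b_2-1$ and the right segment $(b_2,b_3)$ reaches down to $b_2+1$; the configurations $X|_{[b_1,b_2]}$ and $X|_{[b_2,b_3]}$ share the site $b_2$ and abut across it, so no instance of the $k$-dependence definition applies to give the factorization
$\P\bigl(X\mid \text{anchor colors}\bigr)=\prod_{\text{segments}}\P\bigl(\text{segment}\mid\text{its two flanking anchor colors}\bigr)$.
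This is a Markov-type \emph{structural} property of the process, not a consequence of finite dependence, and it cannot be rescued by widening the anchor spacing since segments always abut their anchors. Without it, the object your gluing procedure produces will not have the Holroyd--Liggett joint law, and you have no way to check $1$-dependence of whatever process it does produce. (If the HL coloring did admit such a clean conditional-independence decomposition over a finitary anchor set, one would expect a far simpler construction than the one that exists.)

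The paper's route is structurally different precisely so as not to rely on any such property. Instead of sampling anchor colors from the marginal and filling in segments conditionally, it runs a single recursive ``insertion'' dynamics driven by i.i.d.\ arrival times $U_i$ and preference permutations $\phi_i$, in which each integer takes its most-preferred color not used by its already-arrived neighbours. A ``lucky'' integer, whose favourite color is the fourth favourite of both arrival-time neighbours, has its color forced to be $\phi_i(1)$ regardless of the rest of the configuration, so it serves as an anchor whose color is a deterministic local function of the input randomness rather than an independently resampled marginal. Between two lucky anchors that are local minima of the arrival times, the remaining colors are computed by unwinding the recursion inside the interval, giving a finitary factor with a globally well-defined unique solution by construction, and the coding-radius tail comes from record statistics of the $U_i$. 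Identifying the result with the HL $1$-dependent $4$-coloring is done afterwards by a separate limiting argument for weighted insertion processes (showing that near the centre of a long interval the local law is insensitive to the endpoint-insertion weight), not by any Markov or conditional-independence property of the target process.
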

Our proof of \cref{main} gives an explicit description of the finitary factor
$F$, and our $4$-coloring $X$ is actually the same as the one in \cite{hl}
(but constructed in a different way). The power $\alpha$ that we obtain is
strictly less than $1$ (in fact, it is rather close to $0$), and $T$ has
infinite mean. We do not know whether there exists a finitely dependent
coloring that is a finitary factor of an i.i.d.\ process with finite mean
coding radius.

The result of \cite{hl} that stationary finitely dependent
colorings exist is surprising for several reasons.  A
\df{block factor} is a finitary factor with {\em bounded}
coding radius (so that $X_i$ is a fixed function of
$Y_{j-t},\ldots,Y_{j+t}$). Block factors of i.i.d.\
processes provide a natural means to construct finitely
dependent processes: if the coding radius is at most $k$
then the process is $2k$-dependent. Indeed, for some time
it was an open problem whether {\em every} finitely
dependent process was a block factor of an i.i.d.\ process
(see e.g.\ \cite{janson-84}). The first published
counterexample appears in
\cite{aaronson-gilat-keane-devalk}.  Prior to \cite{hl},
there was a (very reasonable) belief that most ``natural''
finitely dependent processes are block factors (see e.g.\
\cite{adding}). However, it turns out that no block-factor
coloring exists (see e.g.\ \cite{naor,hsw,hl}).  Hence, the
result of \cite{hl} shows that on the contrary, the very
natural task of {\em coloring} serves to distinguish
between block factors and finitely dependent processes. See
\cite{hl} for more on the history of this problem.

\subsection*{Extensions and applications}

We next discuss other $k$ and $q$. One can attempt to apply
our method to the $2$-dependent $3$-coloring of \cite{hl},
but we will see that it encounters a fundamental obstacle
in this case. By a result of Schramm (see \cite{hsw} or
\cite{hl}), no stationary $1$-dependent $3$-coloring
exists.  The stationary $1$-dependent $4$-coloring is
conjectured in \cite{hl} to be unique. (And $(k,q)=(1,4)$
is shown to be a critical point in a certain sense).  It is
plausible that the $2$-dependent $3$-coloring is unique
also, although the evidence is less strong in this case. It
is natural to expect that there is far more flexibility in
$k$-dependent $q$-colorings for larger $k$ and $q$,
although constructing examples seems difficult, and
currently very few are known. In \cite{hl2}, a
$1$-dependent $q$-coloring that is symmetric in the colors
is constructed for each $q\geq 4$. Besides these colorings
and the two in \cite{hl}, and straightforward
embellishments thereof, no other examples are known.  One
such embellishment, described in \cite{hl}, is a
$3$-dependent $3$-coloring that arises as a simple block
factor of the $1$-dependent $4$-coloring.  Since a
composition of finitary factors is finitary, \cref{main}
implies that this $3$-dependent $3$-coloring is a finitary
factor of an i.i.d.\ process also.

In light of the above observations, a natural conjecture is
that there exists a $k$-dependent $q$-coloring that is a
finitary factor of an i.i.d.\ process with finite mean
coding radius if and only if $k\geq 1$, $q\geq 3$, and
$(k,q)\not\in\{(1,3),(2,3),(1,4)\}$.

Coloring on $\Z$ is a key case within a more general
framework.  It is proved in \cite{hl} that for every shift
of finite type $S$ on $\Z$ that satisfies a certain
non-degeneracy condition, there is a stationary finitely
dependent process that lies a.s.\ in $S$. Additionally, in
the $d$-dimensional lattice $\Z^d$, there exist a
$1$-dependent $q$-coloring and a $k$-dependent $4$-coloring
(where $q$ and $k$ depend on $d$).  These facts are proved
by starting from the $1$-dependent $4$-coloring of $\Z$ and
applying block-factors (in some cases using methods
developed in \cite{hsw}). Consequently, using our
\cref{main}, each of these processes is a finitary factor
of an i.i.d.\ process.

The question of $q$-coloring as a finitary factor of a i.i.d.\ process
(without the finite dependence requirement) is addressed in \cite{hsw},
including on $\Z^d$.  Depending on $q$ and $d$, the best coding radius tail
that can be achieved is either a power law (when $q=3$ and $d\geq 2$) or a
tower function (when $q \geq 4$ and $d \geq 2$, or $q\geq 3$ and $d=1$).

Coloring has applications in computer science.  For
instance, colors may represent time schedules or
communication frequencies for machines in a network, and
adjacent machines are not permitted to conflict with each
other.  Finite dependence implies a security benefit
--- an adversary who gains knowledge of some colors learns nothing about the
others, except within a fixed finite distance.  A finitary
factor of an i.i.d.\ process is also desirable. It has the
interpretation that the colors can be computed by the
machines in {\em distributed} fashion, based on randomness
generated locally, combined with communication with
machines within a finite distance.  All machines follow the
same protocol, and no central authority is needed.  See
e.g.\ \cite{linial,naor} for more information.
Unfortunately, the finitary factor of \cref{main} is of
limited practical use because of the heavy tail of $T$ ---
the communication distance to determine the color at the
origin is almost surely finite, but typically huge.

\subsection*{Outline of Proof}
As mentioned earlier, the construction in \cite{hl}
involves specifying the law of the coloring restricted to a
finite interval, and proving that these laws form a
consistent family.  The law on an interval has a
probabilistic interpretation, involving inserting colors in
a random order.  However, this order is not uniform, but
weighted to favor insertions at the endpoints.  The random
orders themselves are therefore {\em not} consistent
between different intervals.  Using uniformly random orders
instead gives consistent orders but inconsistent colorings.
However, we show that, in the limit of a long interval, the
choice of weight does not affect the law near the center.
The key observation is that endpoint insertions are
typically few (in fact $\Theta(\log n)$ in an interval of
length $n$), even under the weighting, so their effect can
be neglected.

To obtain a factor of an i.i.d.\ process we introduce a ``graphical
representation'' of the insertion process that extends naturally to $\Z$.
Each inserted color must itself be randomly chosen, and must differ from its
neighbors.  With $4$ (or more) colors, this means that there is a choice, and
we use this to define special locations at which the random color can be decoupled
from its surroundings, leading to a finite coding radius.

\section{The construction}

Fix a number of colors $q\geq 3$.  Let $(U_i)_{i\in\Z}$ be
i.i.d.\ random variables, with each $U_i$ uniformly
distributed on the interval $[0,1]$.  We interpret $U_i$ as
the {\em arrival time} of the integer $i$.  The idea is
that when an integer arrives, it chooses a color uniformly
at random from the colors that are not present among its
two current neighbors (i.e.\ the nearest integers to the
left and to the right that arrived before it).  Thus, for
$i\in \Z$, define:
\begin{align*}
  L(i)&:=\max\{j<i: U_j < U_i\};\\
  R(i)&:=\min\{j>i: U_j < U_i\}.
\end{align*}
Let $(\phi_i)_{i\in\Z}$ be i.i.d.\ permutations, with each $\phi_i$ uniformly
distributed on the symmetric group $S_q$ of permutations of $[q]$, and with
$(\phi_i)_{i\in\Z}$ independent of $(U_i)_{i\in\Z}$.  The idea is that
$\phi_i$ denotes the {\em preference order} of integer $i$ over the $q$
colors, with $\phi_i(j)$ being $i$'s $j$th favorite color, for $j=1,\ldots,q$.

Given $(U_i)_{i\in\Z}$ and $(\phi_i)_{i\in\Z}$, we seek a sequence
$(X_i)_{i\in \Z}\in[q]^\Z$ that satisfies the system of equations
\begin{equation}\label{system}
\begin{split}
X_i&=\phi_i(K), \qquad i\in\Z,\\
\text{where}\quad K&=K(i)=\min\Bigl\{k\in[q]: X_{L(i)}\neq \phi_i(k)\neq
X_{R(i)}\Bigr\}.
\end{split}
\end{equation}
Thus, the integer $i$ is assigned its favorite color $X_i$ among those that
have not been taken by its current neighbors at its arrival time.

It is clear that any solution $(X_i)_{i\in \Z}$ to \eqref{system} is a
$q$-coloring: we have $X_{L(i)}\neq X_i\neq X_{R(i)}$, but if $U_{i+1}<U_i$
then $R(i)=i+1$, and otherwise $L(i+1)=i$.  However, it is not immediately
clear whether there is a solution: $X_i$ is expressed in terms of $X_{L(i)}$
and $X_{R(i)}$, so the computation apparently involves an infinite regress.

The outcome depends crucially on the number of colors. By a
similar argument to the above, any solution must have
$X_{L(i)}\neq X_{R(i)}$ for all $i$, so precisely two
colors are ruled out for $X_i$ by the requirement
$X_{L(i)}\neq X_i\neq X_{R(i)}$.  Therefore, if $q=3$ then
only one color remains, so the preference order $\phi_i$ is
irrelevant.  On the other hand, if $q\geq 4$ then $i$ has a
choice, therefore it will never need its $4$th favorite
color $\phi_i(4)$ or worse.  This will allow us to end the
regress: if the favorite of $i$ is the $4$th favorite of
$L(i)$ and $R(i)$, then we know that $i$ will receive its
favorite color. Using this idea, we will prove the
following in the next section.

\begin{prop}\label{finitary}
Fix $q\geq 4$.  Let $(U_i)_{i\in\Z}$ be i.i.d.\ uniform on $[0,1]$, and
$(\phi_i)_{i\in\Z}$ i.i.d.\ uniform on $S_q$, independent of each other.
Almost surely, the system of equations \eqref{system} has a unique solution
$(X_i)_{i\in \Z}$.  Moreover, $(X_i)_{i\in \Z}$ is a finitary factor of the
i.i.d.\ process $((U_i,\phi_i))_{i\in \Z}$, with coding radius $T$ satisfying
$$\P(T>t)<c\,t^{-\alpha},\qquad t>0,$$
for some $c,\alpha>0$ depending only on $q$.
\end{prop}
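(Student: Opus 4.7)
The plan is to solve the system \eqref{system} by a recursive query procedure, terminated at ``escape'' sites where a purely local condition forces the color $X_j$ regardless of any wider configuration. The key preliminary observation, implicit in the paragraph preceding the proposition, is that in any solution one has $X_{L(j)} \neq X_{R(j)}$, and hence the index $K(j)$ in \eqref{system} is always at most $3$. Writing $C_j := \{\phi_j(1), \phi_j(2), \phi_j(3)\}$ for the three most preferred colors at $j$, this says $X_j \in C_j$ for every $j \in \Z$ in any solution. Call a site $j$ \emph{decoupling} if
$$\phi_j(1) \notin C_{L(j)} \cup C_{R(j)}.$$
At such a site the bounds $X_{L(j)} \in C_{L(j)}$ and $X_{R(j)} \in C_{R(j)}$ force $\phi_j(1) \neq X_{L(j)}, X_{R(j)}$, so the system \eqref{system} necessarily yields $X_j = \phi_j(1)$. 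For $q \geq 4$ the decoupling event depends only on $\phi_j, \phi_{L(j)}, \phi_{R(j)}$, and conditional on $(U_i)$ it occurs independently at each $j$ with probability $p_q := ((q-3)/q)^2 > 0$; for $q=4$ this is simply the event $\phi_j(1) = \phi_{L(j)}(4) = \phi_{R(j)}(4)$, matching the heuristic given just before the proposition.

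With this escape mechanism I propose the following query procedure for $X_0$. Initialize $\mathcal{T} = \{0\}$; then, while some $j \in \mathcal{T}$ is not decoupling and its neighbors $L(j), R(j)$ are not yet in $\mathcal{T}$, adjoin both to $\mathcal{T}$. This builds a rooted binary structure in which $U$ strictly decreases along every edge $j \to L(j)$ or $j \to R(j)$. If $\mathcal{T}$ is almost surely finite, then propagating the forced values at decoupling vertices back along these edges via \eqref{system} gives a unique candidate for $X_0$, and any solution of \eqref{system} must agree with it at the root; existence, uniqueness, and the finitary-factor property then follow. By construction, $X_0$ is determined by $\{(U_i, \phi_i) : |i| \leq T\}$, where $T := \max\{|j| : j \in \mathcal{T}\}$, so \cref{finitary} reduces to a power-law tail bound on $T$. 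Splitting this as $\P(T > t) \leq \P(|\mathcal{T}| > n) + \P(T > t, \, |\mathcal{T}| \leq n)$, the second term is controlled by a standard reversal-symmetry estimate on $(U_i)$: conditional on $\mathcal{T}$ having $n$ vertices, reaching distance $t$ from the origin requires the arrival process to exhibit a rare long-range feature whose probability is polynomial in $t$ uniformly in $n$.

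The substantive part of the proof is the polynomial tail on $|\mathcal{T}|$. A naive branching-process analysis, treating $\mathcal{T}$ as a Galton--Watson tree with two offspring per non-decoupling vertex, is supercritical for every $q \geq 4$ (for $q=4$ the offspring mean is $2(1-1/16)=15/8$), so the independent decoupling probability alone is insufficient to yield even finiteness. The additional structure I plan to exploit is that every vertex of $\mathcal{T}$ lies in the ancestral chain of $0$ in the Cartesian tree of $(U_i)_{i \in \Z}$: both $L(j)$ and $R(j)$ are Cartesian-tree ancestors of $j$, so descending into $\mathcal{T}$ corresponds to climbing the Cartesian tree, and $L, R$ queries launched from distinct vertices of $\mathcal{T}$ very frequently return the same ancestor, collapsing the effective branching. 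Quantifying this --- most naturally via a fractional-moment estimate $\E[|\mathcal{T}|^s] \leq C$ for some small $s > 0$, proved inductively on the Cartesian-tree depth and combined with the independent decoupling at each level --- is the principal obstacle. Once this is established, it combines with the indexing estimate above to give $\P(T > t) \leq c\,t^{-\alpha}$ for some (small) $\alpha > 0$, consistent with the remark after \cref{main} that the exponent one obtains in this way is rather close to $0$.
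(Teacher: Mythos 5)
Your core idea --- identifying sites where the color is forced locally and using them to terminate the recursion in \eqref{system} --- is the same as the paper's, and your ``decoupling'' condition coincides with the paper's ``lucky'' condition when $q=4$ (and is a mild generalization for $q>4$). But the way you propose to extract the coding-radius bound is genuinely different, and at the crucial step it is not a proof: you explicitly flag the estimate $\E[|\mathcal T|^s]\leq C$ as ``the principal obstacle'' and leave it open, and the reduction from there to a tail bound on $T=\max\{|j|:j\in\mathcal T\}$ (your ``reversal-symmetry estimate, uniformly in $n$'') is also only asserted. Your observation that the naive branching process is supercritical is correct, and your observation that all vertices of $\mathcal T$ lie on the ancestral chain of $0$ in the Cartesian tree is a genuine and useful structural fact; but you have identified the problem without solving it. There is also a minor bookkeeping issue: determining that a leaf $j$ of $\mathcal T$ is decoupling requires reading $\phi_{L(j)},\phi_{R(j)}$, so the true coding radius exceeds $\max\{|j|:j\in\mathcal T\}$ and you must account for the positions of $L(j),R(j)$ for leaves $j$ as well.

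The paper avoids the tree analysis entirely. Rather than growing an exploration set from $0$ and trying to control its size, it searches \emph{outward} for a single configuration that makes the computation manifestly finite: two lucky integers $A<0<B$ (Lemma~\ref{lucky}) satisfying condition \eqref{ends}, namely $\max\{U_A,U_B\}<\min\{U_i:i\in(A,B)\}$. Condition \eqref{ends} guarantees that for every $i\in(A,B)$ both $L(i)$ and $R(i)$ lie in $[A,B]$, so the recursion for $(X_i)_{i\in(A,B)}$ is confined to $[A,B]$ once $X_A=\phi_A(1)$ and $X_B=\phi_B(1)$ are pinned down by luckiness. The coding radius is then bounded by $\max\{|L(A)|,|R(A)|,|L(B)|,|R(B)|\}$, whose tail is controlled by a direct construction on dyadic scales using ``absolute records'' (records of the sequence $U_0,U_1,U_{-1},U_2,U_{-2},\dots$), whose occurrences at different indices are independent. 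This sidesteps both of your unresolved estimates. In particular, it is not the \emph{number} of queried vertices that needs to be controlled (that number is typically logarithmic in the range anyway), but their \emph{range}; the paper controls the range directly via the closed-box structure, whereas your decomposition $\P(T>t)\leq\P(|\mathcal T|>n)+\P(T>t,\,|\mathcal T|\leq n)$ attacks a quantity ($|\mathcal T|$) that is neither the real bottleneck nor easy to control given the correlations among decoupling events along the ancestral chain. I would recommend replacing the fractional-moment plan with a direct search for a bounding box in the spirit of Lemma~\ref{lucky}.
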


In the later sections we will prove that when $q=4$, the process
$(X_i)_{i\in\Z}$ coincides with the $1$-dependent $4$-coloring constructed in
\cite{hl}.

When $q=3$ it is not difficult to check that a.s.\ the system of equations
\eqref{system} has exactly $3!=6$ solutions.  There is a.s.\ a uniquely defined
partition of $\Z$ into $3$ sets, which depends on $(U_i)_{i\in Z}$ but not on
$(\phi_i)_{i\in\Z}$, and each solution corresponds to an assignment of colors
to the $3$ sets.  If this assignment chosen to be a uniformly random
permutation in $S_3$, independent of $(U_i)_{i\in Z}$, then the resulting
process is the $2$-dependent $3$-coloring of \cite{hl}. However, this
``global assignment'' step means that the construction is not a finitary
factor.

\section{Coding radius bound}

In this section we prove \cref{finitary}.  Motivated by the discussion of the
previous section, given $(U_i,\phi_i)_{i\in \Z}$, we say that the integer
$i$ is \df{lucky} if
$$\phi_{L(i)}(4)=\phi_i(1)=\phi_{R(i)}(4).$$
(We could allow $\phi_{L(i)}(k),\phi_{R(i)}(k')$ for any
$k,k'\geq 4$ here, but the above definition suffices, and
in any case our main focus is $q=4$). Here is the key step.

\begin{lemma}\label{lucky}
Let $q\geq 4$ and let $(U_i,\phi_i)_{i\in \Z}$ be as in \cref{finitary}.
For any $m\geq 0$, a.s.\ there exist lucky integers $A$ and $B$ with
$[-m,m]\subseteq[A,B]$ such that
\begin{equation}\label{ends}
\max\{U_A,U_B\}<\min\{U_i:i\in(A,B)\}.
\end{equation}
Moreover, $A$ and $B$ can be chosen so that
\begin{equation}\label{prob-max}
\P\Bigl(\max\bigl\{|L(A)|,|R(A)|,|L(B)|,|R(B)|\bigr\}>t\Bigr)<c\,t^{-\alpha},
 \quad t\geq 1,
\end{equation}
for some positive constants $c=c(q,m)$ and $\alpha=\alpha(q)>0$.
\end{lemma}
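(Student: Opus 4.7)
Proof plan.

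The plan is a Poisson-type argument over candidate pairs in a large interval. Call a pair $(A,B)$ with $A\le-m$ and $B\ge m$ a \emph{candidate} if $U_A$ and $U_B$ are the two smallest values of $U$ on $[A,B]$ (i.e.\ precisely the event \eqref{ends}). An elementary order-statistic computation gives
\[
\P\bigl((A,B)\text{ is a candidate}\bigr)=\frac{2}{(B-A+1)(B-A)},
\]
so writing $a=-A$ and $b=B$, the expected number of candidates with $A,B\in[-n,n]$ is
\[
2\sum_{a=m}^{n}\sum_{b=m}^{n}\frac{1}{(a+b+1)(a+b)}\;\asymp\;\log(n/m).
\]

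Conditional on the $U$-process, any candidate $(A,B)$ with $U_A<U_B$ satisfies $L(B)=A$ (since $U_j>U_B$ for $j\in(A,B)$), so the joint event ``$A$ and $B$ are both lucky'' depends only on the $\phi$'s at the five positions $L(A),A,R(A),B,R(B)$. A direct computation, using $\P(\phi(1)=x,\phi(4)=y)=1/(q(q-1))$ for $x\ne y$ together with independence of the $\phi$'s at different positions, shows that this joint event has probability exactly $1/q^4$ on the event $R(A)\ne R(B)$ and vanishes on $R(A)=R(B)$ (which would force the impossibility $\phi_A(1)=\phi_A(4)$). Since $\P(R(A)\ne R(B)\mid\text{candidate})$ is bounded away from~$0$ (it converges to $\tfrac12$ on long intervals), the expected number $N_n$ of lucky candidates with $A,B\in[-n,n]$ satisfies $\E[N_n]\asymp\log(n/m)/q^4$. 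Moreover, distinct candidate pairs involve largely disjoint sets of $\phi$-coordinates---their overlaps being controlled by the Cartesian-tree structure of $U$---so a Chen--Stein or second-moment argument gives
\[
\P(N_n=0)\le c_1\,n^{-\alpha},\qquad \alpha=\alpha(q)=\Theta(1/q^4),
\]
consistent with the paper's remark that $\alpha$ is rather close to~$0$.

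For the neighbor bound, pick $(A,B)$ to be any lucky candidate with $A,B\in[-n,n]$, so $|A|,|B|\le n$ and $L(B)=A$. Given $U_A$ and $U_B$, the displacements $A-L(A)$ and $R(A)-B$ are Geometric$(U_A)$ and $R(B)-B$ is Geometric$(U_B)$; since $U_A$ and $U_B$ are typically of order $1/n$ (being the two smallest of $U$ on an interval of length at most~$2n$), each such neighbor lies within $Cn$ of the origin with probability $1-e^{-\Omega(C)}$. Hence
\[
\max\bigl\{|L(A)|,|R(A)|,|L(B)|,|R(B)|\bigr\}\le Cn
\]
with probability $1-O(n^{-\alpha})$, and setting $n=\lceil t/C\rceil$ yields the desired tail bound $\P(T>t)\le c\,t^{-\alpha}$.

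The main obstacle is the concentration in the second step: although $\E[N_n]\to\infty$, the lucky indicators at different candidate pairs are only \emph{approximately} independent because pairs can share Cartesian-tree neighbors. A careful Chen--Stein analysis exploiting the parent--right-child structure of candidate pairs in the infinite Cartesian tree of $U$ is needed to establish the Poisson-style lower tail $\P(N_n=0)\le c\,n^{-\alpha}$ and hence the polynomial decay.
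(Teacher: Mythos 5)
Your plan takes a genuinely different route from the paper's, and it leaves a real gap at exactly the step that carries the burden of the lemma. You correctly compute $\E[N_n]\asymp q^{-4}\log(n/m)$ for the number $N_n$ of lucky candidate pairs in $[-n,n]$, but the conclusion $\P(N_n=0)\le c\,n^{-\alpha}$ is asserted, not proved, and you flag this yourself as ``the main obstacle.'' It is the entire difficulty: candidate pairs are nested and determined by the same $U$-process (so the candidate indicators are strongly correlated, with fluctuations in $N_n$ that are not obviously of lower order than the mean), and on top of that the five-coordinate $\phi$-support $\{L(A),A,R(A),B,R(B)\}$ of one lucky test can overlap with that of another. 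A Chen--Stein or second-moment argument would require bounding these pair correlations over all scales, which you have not done; without it the Poisson-style lower tail is unjustified.

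The paper bypasses concentration entirely by engineering independence. It orders the integers as $(0,1,-1,2,-2,\ldots)$ and calls the $j$th one an \emph{absolute record} if its $U$-value beats all previous ones; an absolute record occurs at position $j$ with probability $1/j$, independently over $j$. For each $n$ it defines an event $E_n$ supported on the dyadic annulus $\{i:2^{5n}\le|i|<2^{5n+5}\}$, asking for exactly one absolute record in each of the five sub-annuli in a prescribed left/right pattern ($a,b$ positive, $c,d$ negative, $e$ positive), together with the preference matches $\phi_c(4)=\phi_a(1)=\phi_b(4)$ and $\phi_d(4)=\phi_c(1)=\phi_e(4)$. On $E_n$ one takes $A=c$, $B=a$; then \eqref{ends} holds and all four neighbors are bounded by $2^{5n+5}$. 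The events $(E_n)_{n\ge 1}$ are supported on disjoint annuli and disjoint $\phi$-coordinates, hence independent, each with probability at least $8^{-5}q^{-4}$, so $\P(T>t)\le\prod_{n\le N}\bigl(1-\P(E_n)\bigr)$ with $N\asymp\log t$ gives the polynomial tail in one line. This costs a much worse exponent than the $\Theta(q^{-4})$ your heuristic suggests, but requires no Poisson approximation at all. If you wish to complete your route, the simplest fix is to localize your candidate pairs to disjoint dyadic scales so that genuine independence is restored---which is essentially what the paper does.
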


\begin{proof}[Proof of \cref{finitary}]
Using \cref{lucky} with $m=0$, let $T\geq 1$ be the smallest integer for which
there exist lucky $A\leq 0 \leq B$ satisfying \eqref{ends} for which the
maximum appearing in \eqref{prob-max} is at most $T$.  This $T$ can be
determined from the variables $(U_i,\phi_i)$ for $|i|\leq T$ (by examining
the integers $i$ in order of absolute value), and \cref{lucky} states that it
satisfies the sought tail bound.

Since $A$ and $B$ are lucky, in any solution to \eqref{system} we have that
$X_A=\phi_A(1)$ and $X_B=\phi_B(1)$.  Condition \eqref{ends} implies that for
all $i\in(A,B)$ we have $L(i),R(i)\in[A,B]$.  Therefore, the remaining colors
$(X_i)_{i\in(A,B)}$ in the interval can be determined via \eqref{system} from
$((U_i,\phi_i))_{i\in [A,B]}$, by considering them in increasing order of
$U_i$.  In particular, we can determine $X_0$.

By stationarity, we can similarly find an interval $[A_i,B_i]\ni i$
corresponding to any $i\in \Z$.  By applying \cref{lucky} with larger $m$, it
is easy to see that the colors $(X_j)_{j\in[A_i,B_i]}$ computed from
different intervals are consistent with each other and with \eqref{system}.
We conclude that the resulting $(X_i)_{i\in \Z}$ is the unique solution to
\eqref{system}, and is a finitary factor of $(U_i,\phi_i)_{i\in \Z}$ with
coding radius $T$.
\end{proof}

Before proving \cref{lucky}, we briefly discuss where the power law tail bound
comes from.  First note that even $R(0)$ has mean $\infty$, since it is the
location of the second record minimum of the i.i.d.\ sequence $(U_i)_{i\geq
0}$ (the first record being at $i=0$). However, the integer $B$ of
\cref{lucky} is much larger than this. In addition to being a record, it must
be lucky. Consider the simplified situation in which $(G_i)_{i\in\Z}$ are
independent events of probability $p$, independent of $(U_i)_{i\in\Z}$. Let
$J$ be the smallest positive integer for which $G_J$ occurs and $(U_i)_{i\geq
0}$ has a record minimum at $J$.  Then, by the standard fact (see e.g.\
\cite[Example 2.3.2]{durrett}) that there is a record at $i$ with probability
$1/i$, independently for different $i$, we have
$$\P(J>t)=\prod_{i=1}^t \Bigl(1-\frac p i\Bigr)=\Theta(t^{-p})
\quad\text{as }t\to\infty.$$

For $q=4$, the probability that an integer is lucky is $1/16$, therefore at
best we can expect a power $\alpha=1/16$ in our tail bound on the coding
radius. We have not attempted to optimize $\alpha$, so the bound we prove is
in fact much smaller than this.  At the expense of additional complexity, our
method below could be adapted to prove a power closer to $1/16$.  By refining
the definition of lucky integers to encompass more complicated local patterns
of preferences, the bound could likely be increased beyond $1/16$. However,
any such improvement would still result in a power $\alpha$ strictly less
than $1$, and infinite mean coding radius.

\begin{proof}[Proof of \cref{lucky}]
To find suitable $A$ and $B$ we examine the integers in order of absolute
value.  Call $i\in\Z$ an \df{absolute record} if $U_i$ is smaller than all
the terms that precede it in the sequence $U_{i_1},U_{i_2},U_{i_3},\ldots,$
where $(i_1,i_2,i_3,\ldots)=(0,1,-1,2,-2,\ldots)$ Since the reordered
sequence is of course still i.i.d., $i_j$ is an absolute record with
probability $1/j$, and the events that different integers are absolute
records are independent.

Now, for $n\geq 1$, we compute the probability
\begin{equation*}
\begin{split}
\begin{aligned}
\P\Bigl[&[2^n,2^{n+1}) \text { contains exactly one absolute record, and}\\
 &(-2^{n+1},-2^n] \text { contains no absolute records}\Bigr]
\end{aligned}\\
\begin{aligned}
&=\prod_{j=2^{n}}^{2^{n+1}-1} \Bigl(1-\frac{1}{2j}\Bigr)\Bigl(1-\frac{1}{2j+1}\Bigr)
\cdot\sum_{i=2^n}^{2^{n+1}-1} \frac{\tfrac 1{2i}}{1-\tfrac 1{2i}}\\
&\geq \frac{1}{8}
\end{aligned}
\end{split}
\end{equation*}
(The product telescopes, and the sum can be bounded via its smallest term).
Similarly, the probability that $(-2^{n+1},-2^n]$ contains exactly one
absolute record while $[2^n,2^{n+1})$ contains none is also at least $1/8$.

For $n\geq 1$, let $E_n$ be the event that there exist integers $a,b,c,d,e$
satisfying all of:
\begin{ilist}
  \item
  $2^{5n}{\leq a <} 2^{5n+1} {\leq b <} 2^{5n+2}
  {\leq -c <} 2^{5n+3}{\leq -d <}2^{5n+4}{\leq e <}2^{5n+5}$;
  \item
  $a,b,c,d,e$ are the only absolute records in
  \sloppypar $(-2^{5n+5},-2^{5n}]\cup[2^{5n},2^{5n+5})$;
  \item
  $\phi_c(4)=\phi_a(1)=\phi_b(4)$, and $\phi_d(4)=\phi_c(1)=\phi_e(4)$.
\end{ilist}
On $E_n$, we have $L(a)=c$, $R(a)=b$, $L(c)=d$, and $R(c)=e$; therefore, $a$
and $c$ are lucky; we take $A=c$ and $B=a$.  We have
$[-2^{5n},2^{5n}]\subseteq [A,B]$, and \eqref{ends} holds.  Moreover, the
maximum in \eqref{prob-max} is at most $2^{5n+5}$.

On the other hand, the events $(E_n)_{n\geq 1}$ are independent.  Using the
previous computation, we have $\P(E_n)\geq (1/8)^5 (1/q)^4$.  We conclude
that the claimed bound holds with $\alpha=-\log(1-8^{-5}q^{-4})/(5\log 2)$.
(When $q=4$, this is approximately $3\times 10^{-8}$).
\end{proof}

\section{Weighted insertion processes}

We introduce a family of random proper colorings of finite intervals, which
we call \df{weighted insertion} (WI) colorings. Throughout this section, an
interval $[a,b]$ is understood to denote the set of integers $[a,b]\cap\Z$,
where $a,b\in\Z$.  A finite sequence $x=(x_i)_{i\in[a,b]}$ is a \df{coloring}
of $[a,b]$ if $x_i\neq x_{i+1}$ for all $a\leq i <b$.

Fix a number of colors $q\geq 3$ and a real \df{weight} $w>0$.  For $n\geq
1$, we define the \df{WI coloring} $X=X^{[n]}\in[q]^n$ of the interval
$[n]=[1,n]$ (with parameters $(w,q)$), via an iterative constriction. When
$n=1$, $X^{[1]}$ is a sequence of length $1$ consisting of a uniformly random
color from $[q]$. Conditional on $X^{[n]}=(X_1,\ldots,X_n)$, we construct
$X^{[n+1]}$ by the following insertion procedure.

First, we choose a random
\df{insertion point} $I$, with law that is uniform on $[1,n+1]$
except that the two endpoints have bias $w$:
$$\P(I=i)=
\begin{cases}
\frac{w}{2w+n-1},&i=1 \text{ or }n+1;\\
\frac{1}{2w+n-1},&i=2,\ldots,n.
\end{cases}
$$
Then we choose a random color $Z$ uniformly from the set
$$[q]\setminus\{X_{I-1},X_I\}$$
of colors that differ from the neighbors of the insertion point.  Here, $X_0$
and $X_{n+1}$ are taken to be $\infty$ (say), so that the above set has size
$q-1$ if $I\in\{1,n+1\}$, and otherwise size $q-2$ (since $X_{I-1}\neq X_I$ in a
proper coloring).  Finally, we insert $Z$ just before location $I$ to form
$X^{[n+1]}$:
$$X^{[n+1]}:=(X_1,\ldots,X_{I-1},Z,X_I,\ldots,X_n).$$

For an arbitrary integer interval $[a,b]$, we define the WI coloring
$X^{[a,b]}=(X_a,\ldots,X_b)$ to be simply equal in distribution to
$X^{[n]}=(X_1,\ldots,X_n)$ where $n=b-1+1$.  (No particular joint law is
assumed between different intervals, at present).

For $x\in [q]^n$, let
$$P(x)=P^{q,w}_n(x):=\P(X^{[n]}=x)$$
denote the probability mass function of the WI coloring $X^{[n]}$ of length
$n$. The above iterative description immediately gives rise to a recurrence
for $P$. Let $\hat x_i:=(x_1,\ldots,x_{i-1},x_{i+1},\ldots,x_n)$ denote the
sequence $x$ with its $i$th element deleted.  Then, for $n\geq 2$, if $x\in
[q]^n$ is a proper coloring,
$$P(x)=\frac{1}{2w+n-2}\biggl[
\frac{w}{q-1} \bigl[P(\hat x_1)+P(\hat x_n)\bigr]+\frac{1}{q-2}\,{\sum_{i=2}^{n-1}} P(\hat x_i)
\biggr],
$$
and $P(x)=0$ if $x$ is not a proper coloring.

Two special choices of the weight $w$ play an important role.  The first
is
\begin{equation}\label{w-star}
w^*=w^*(q):=\frac{q-1}{q-2}.
\end{equation}
In this case, the mass function of the insertion point $I$ is proportional to
the number of possible colors that are available for insertion at that point
($q-1$ or $q-2$ according to whether or not it is an endpoint), so the
insertion procedure amounts to choosing the pair $(I,Z)$ uniformly from the
set of all its allowed values.  In this case, the above recurrence reduces to
\begin{equation}\label{recur}
P(x)=\frac{1}{n(q-2)+2} \sum_{i=1}^n P(\hat x_i).
\end{equation}
Moreover, we have the following.

\begin{prop}[Holroyd and Liggett \cite{hl}]\label{col-cons}
Let $q\geq 3$ and $w=w^*(q)$.  The laws of the WI colorings on integer
intervals are consistent.  That is, if $[a,b]\subseteq [A,B]$ then the
restriction $X^{[A,B]}|_{[a,b]}$ is equal in law to $X^{[a,b]}$.
\end{prop}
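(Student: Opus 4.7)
The plan is to reduce the statement to a single one-step restriction identity and prove that by induction on $n$. Since any subinterval $[a,b]\subseteq[A,B]$ can be reached by iteratively stripping one element from either end, it suffices to show that for every $n\geq 1$ and every proper coloring $x\in[q]^n$,
\begin{equation*}
\sum_{z\in[q]\setminus\{x_n\}} P_{n+1}(x_1,\ldots,x_n,z)=P_n(x),
\end{equation*}
together with the analogous identity for deletion of the first coordinate. The two are equivalent by reversal symmetry of the insertion rule (both endpoints are weighted identically), so only the first needs to be proved.

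For the inductive step I would expand $P_{n+1}(x_1,\ldots,x_n,z)$ using the recurrence \eqref{recur} and sum over $z\neq x_n$, splitting the deletion index $i\in\{1,\ldots,n+1\}$ into three cases. When $i=n+1$ the summand is $P_n(x)$, independent of $z$, contributing $(q-1)P_n(x)$ after summing. When $1\leq i\leq n-1$ the summand is $P_n(\hat x_i,z)$, where $\hat x_i$ is the length-$(n-1)$ sequence obtained by removing $x_i$ from $x$; the sum over $z\neq x_n$ is then exactly the induction hypothesis applied to $\hat x_i$, yielding $P_{n-1}(\hat x_i)$. The case $i=n$ is the only subtle one: the induction hypothesis provides the sum over $z\neq x_{n-1}$ rather than $z\neq x_n$, and since $x$ is a proper coloring these two sums differ by the single term with $z=x_n$, so a one-term inclusion-exclusion gives $P_{n-1}(x_1,\ldots,x_{n-1})-P_n(x)$.

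Assembling the three contributions and identifying $P_{n-1}(x_1,\ldots,x_{n-1})$ with $P_{n-1}(\hat x_n)$, the left-hand side simplifies to
\begin{equation*}
\frac{1}{(n+1)(q-2)+2}\Bigl[(q-2)P_n(x)+\sum_{i=1}^{n}P_{n-1}(\hat x_i)\Bigr].
\end{equation*}
A second application of \eqref{recur}, now substituting $\sum_{i=1}^{n}P_{n-1}(\hat x_i)=\bigl(n(q-2)+2\bigr)P_n(x)$, reduces the bracket to $\bigl((n+1)(q-2)+2\bigr)P_n(x)$, and the whole expression collapses to $P_n(x)$, closing the induction. The main obstacle is the off-by-one bookkeeping in the $i=n$ case, where the restriction on $z$ has to be switched and where propriety of $x$ is invoked; everything else is a mechanical expansion and resummation, and the choice $w=w^*$ enters only through the symmetric form of \eqref{recur}.
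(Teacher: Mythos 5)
Your proof is correct and takes essentially the same route as the paper: the cited proof in Holroyd--Liggett is, as the paper states, ``a fairly straightforward induction using \eqref{recur},'' which is precisely your argument of reducing to a one-step marginalization identity and expanding via the recurrence. The bookkeeping in the $i=n$ case (switching the excluded value from $x_{n-1}$ to $x_n$ using propriety of $x$) and the final resummation via \eqref{recur} are both handled correctly.
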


The proof of \cref{col-cons} in \cite{hl} is a fairly straightforward
induction using \eqref{recur}. By the Kolmogorov extension theorem,
\cref{col-cons} implies that there exists a stationary coloring $(X_i)_{i\in
\Z}$ on the infinite line whose restrictions to intervals are given by the WI
model. This process has the following surprising properties.  The proof given
in \cite{hl} is again by induction using \eqref{recur}, and is short but
mysterious.

\begin{thm}[Holroyd and Liggett \cite{hl}]\label{kdep}
The stationary coloring $(X_i)_{i\in \Z}$ that extends the WI model with
weight $w^*$ is $1$-dependent when $q=4$, and $2$-dependent when $q=3$, but
not finitely dependent when $q\notin\{3,4\}$.
\end{thm}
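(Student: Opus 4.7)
The plan is induction on interval length $n$, with the recurrence \eqref{recur} as the engine. By \cref{col-cons} the finite-dimensional laws $P_n$ extend consistently to a stationary process on $\Z$, so $k$-dependence of the extension is equivalent to the family of factorization identities
\[
\sum_{y} P_n(u, y, v) \;=\; P_a(u)\, P_{n-a-k}(v),
\]
valid for every $n$, every $a\ge 1$ with $a+k+1\le n$, and all proper colorings $u$ of length $a$ and $v$ of length $n-a-k$, where $y$ ranges over length-$k$ proper colorings of the gap that are compatible with $u_a$ and $v_1$. Call this assertion $\mathcal{S}_n(q,k)$. The theorem is then equivalent to establishing $\mathcal{S}_n(4,1)$ and $\mathcal{S}_n(3,2)$ for all $n$, and exhibiting, for each $q\ge 5$ and each $k$, some $n$ at which $\mathcal{S}_n(q,k)$ fails.

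For the positive cases I would prove $\mathcal{S}_n$ by induction on $n$, with $n=k+2$ as base case (a direct calculation from the recurrence; for $(q,k)=(4,1)$ it reduces to checking $\P(X_1=u,X_3=v)=1/16$ for all $u,v\in[4]$). In the inductive step, expand the left side using \eqref{recur} and partition the $n$ resulting deletion terms according to whether the deleted index lies in the left block $[1,a]$, inside the gap $[a+1,a+k]$, or in the right block $[a+k+1,n]$. Interchange the sum over $y$ with the deletion sum. The left- and right-block contributions inherit a factored form from the inductive hypothesis $\mathcal{S}_{n-1}(q,k)$ applied to the shortened configurations obtained by deletion, while the gap contributions reduce by direct counting, using that the number of admissible colors at an interior insertion is $q-2$ and at a boundary insertion is $q-1$. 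In parallel, apply \eqref{recur} in reverse to $P_a(u)$ and $P_{n-a-k}(v)$ on the right; after cancellation, the desired identity collapses to a single polynomial relation in $n$, $a$, $q$ built from $q-2$, $q-1$, and the normalizations $m(q-2)+2$ for various $m$.

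This residual polynomial identity is the heart of the argument and the main obstacle. It holds precisely at $(q,k)=(4,1)$ and at $(q,k)=(3,2)$, and the proof of those two positive cases reduces to verifying it. For each $q\ge 5$ the same expansion leaves a nonzero residual. To convert this into the failure of $k$-dependence I would isolate a single scalar two-point observable --- for example $\P(X_1=c_1,X_n=c_n)-\P(X_1=c_1)\,\P(X_n=c_n)$ for suitable colors $c_1,c_n$ --- derive from \eqref{recur} a closed recursion for it as a function of $n$, and show by a sign or monotonicity argument that it is strictly nonzero for every $n\ge 2$. Since $k$-dependence forces this quantity to vanish whenever $n>k+1$, this contradicts $k$-dependence for every finite $k$.

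The subtle step throughout is the bookkeeping that identifies the residual algebraic identity and verifies it: three applications of \eqref{recur} with three different $n$-dependent normalizations must be aligned, and the inductive hypothesis must be invoked on configurations of length $n-1$ with one of the deletions lying in one of three separate regions, so the gap-counting contribution must be matched precisely against the mismatch between the two recurrence normalizations. This is presumably the terse but opaque calculation from \cite{hl}; a cleaner treatment would likely encode the recurrence as a transfer operator on a finite-dimensional space of local configurations and deduce $k$-dependence from an algebraic property of that operator that holds only for the two lucky pairs $(q,k)$.
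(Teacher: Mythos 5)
This theorem is cited from~\cite{hl} and is not proved in the present paper; the only indication given here is that the proof in~\cite{hl} ``is again by induction using \eqref{recur}, and is short but mysterious.'' Your outline --- reducing $k$-dependence via \cref{col-cons} to the factorization $\sum_y P_n(u,y,v)=P_a(u)P_b(v)$, then inducting on $n$ by expanding the left side with \eqref{recur} and sorting deletion indices into left block, gap, and right block --- matches that one-line description of the Holroyd--Liggett argument, and the reduction from general $k$-dependence to the interval (past--future) form is sound.

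That said, what you have written is a plan, not a proof, and the phrasing ``single polynomial relation\ldots holds precisely at $(4,1)$ and $(3,2)$'' glosses over what actually happens. The bookkeeping is delicate in a specific way you do not mention: when the recurrence deletes the boundary index $i=a$, the admissibility constraint on $y$ that survives (namely $y\neq u_a$) is not the one needed to invoke $\mathcal{S}_{n-1}$ on $(\hat u_a,y,v)$ (which needs $y\neq u_{a-1}$), so a correction term $-P_{n-1}(u,v)$ appears; similarly on the right. For $k=1$, carrying this through and using \eqref{recur} in reverse on $P_a(u)$ and $P_b(v)$ collapses everything to the identity $(q-4)\,P_{n-1}(u,v)=(q-4)\,P_a(u)P_b(v)$, which is an identity exactly because the coefficient $q-4$ vanishes --- not because some genuine polynomial relation happens to hold at $q=4$. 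In particular, for $q\neq 4$ the inductive step would require the \emph{zero-gap} factorization $P_{a+b}(u,v)=P_a(u)P_b(v)$, which fails, and this is where the seed of the negative result for $q\geq 5$ lies; your proposed route via a closed recursion for a two-point observable is plausible but entirely unverified, and it is the weakest part of the sketch. The base case you name is correct (for $(q,k)=(4,1)$, $\sum_y P_3(u,y,v)=1/16=P_1(u)P_1(v)$), so the architecture is right; what is missing is the actual cancellation computation and, for $q\geq 5$, any argument that the defect is nonzero for infinitely many $n$.
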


It is easy to check that the consistency property of \cref{col-cons} does not
hold for any choice of weight other than $w^*$.  The main purpose of this
section is to prove the following proposition, which states that $w^*$ is an
attracting fixed point under restriction.

\begin{prop}\label{limit-col}
Fix $q\geq 3$, $w>0$ and $m\geq 0$.  For $n\geq m$, let $X^n$ be the WI
coloring on $[-n,n]$ with parameters $(q,w)$.  As $n\to \infty$, the
restriction $X^n|_{[-m,m]}$ of the coloring to $[-m,m]$ converges in law to
the WI coloring on $[-m,m]$ with parameters $(q,w^*)$.
\end{prop}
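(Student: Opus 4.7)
My approach is to rewrite the $\WI_w$ recurrence so that $\WI_{w^*}$ appears as the ``consistent'' part and the deviation is a small boundary term, then couple the two insertion processes and argue that the coupling-disagreements are too few and too peripheral to reach $[-m,m]$ as $n\to\infty$.

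The algebraic starting point is that, since $w^*/(q-1)=1/(q-2)$, the $\WI_w$ recurrence can be rewritten as
\begin{equation*}
(2w+n-2)\,P_w(x) \;=\; \frac{1}{q-2}\sum_{i=1}^{n} P_w(\hat x_i) \;+\; \frac{w-w^*}{q-1}\bigl[P_w(\hat x_1)+P_w(\hat x_n)\bigr].
\end{equation*}
The first sum is exactly the right-hand side of the $\WI_{w^*}$ recurrence \eqref{recur}, and the second term is a correction of size $|w-w^*|$ supported only on deletions at the two endpoints; for $w=w^*$ it vanishes, which is one way to view \cref{col-cons}. The display exhibits $w^*$ as a fixed point of the restriction operation and suggests that the boundary bias is a controllable perturbation of the central law.

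I would then build a coupling of $\WI_w$ and $\WI_{w^*}$ on $[-n,n]$ by running both insertion processes step by step with shared randomness: at each step with current length $k$, a single uniform $U_k\in[0,1]$ decides endpoint-vs-interior for both (via the thresholds $2w/(2w+k-1)$ and $2w^*/(2w^*+k-1)$), and the direction, interior-position, and color choices are shared whenever possible. The two thresholds differ by $\Theta(1/k)$, so the expected number of ``disagreement steps'' over the length-$(2n+1)$ history is only $\Theta(\log n)$, with good concentration; at the many remaining steps where both processes take the same kind of action, the coupling is preserved automatically on the shared portion of the sequence.

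The technical heart is showing that these $\Theta(\log n)$ disagreements cannot perturb $[-m,m]$ in the limit. Each disagreement creates a local discrepancy between the two coupled sequences near the current boundary of the insertion; subsequent insertions either leave it unchanged (if they fall sufficiently far away), permit re-coupling across it (if they fall inside, with a small additional trick to handle differing valid-color sets), or shift it by at most one position (if they are further endpoint insertions on the same side). Since the total number of subsequent endpoint insertions is still only $\Theta(\log n)$, the discrepancy regions remain within $\Theta(\log n)$ of the two ends of the length-$(2n+1)$ final sequence, which sits at distance $n-m$ from $[-m,m]$. On the high-probability event that no discrepancy reaches $[-m,m]$, the coupled colorings agree there; \cref{col-cons} then identifies the $\WI_{w^*}$ marginal on $[-m,m]$ with the $\WI_{w^*}$ coloring on $[-m,m]$, completing the proof. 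The principal obstacle is the localization step: because a single disagreement can momentarily distort a block of length up to $\Theta(k)$, the coupling must be chosen carefully so that the discrepancy really is confined to a narrow strip near the boundary despite the coloring-dependent nature of the valid-color sets.
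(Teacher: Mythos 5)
Your approach is genuinely different from the paper's. The paper splits the WI model into two layers---the random total order $\L$, whose law depends on $w$ but not $q$, and the conditional coloring given $\L$, whose law depends on $q$ but not $w$---and then proves (\cref{log}, \cref{no-found}, \cref{cond-unif}) that conditioned on $\L$ having \emph{no founders} in a central window $[-m,m]$, the restriction of $\L$ to that window is \emph{exactly} uniform, independent of $w$. Combined with the Markov-type property of colorings across founder-free blocks (\cref{eqd-2}, \cref{eqd-3}), this allows the two colorings to be coupled directly on the central window. Your plan instead runs the two insertion chains step by step with shared randomness and tracks the discrepancy set through time.

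The gap in your proposal is exactly the step you flag as the principal obstacle, and I do not think it is resolvable as stated. When a disagreement step occurs at length $k$ (say, $w$ inserts at the left endpoint while $w^*$ inserts at a uniformly chosen interior position $j$), the two current sequences fail to agree on a block of positions of length $j$, and $j$ is of order $k$ with constant probability. Calling this a ``local discrepancy near the boundary'' and asserting it is subsequently ``shifted by at most one position'' is not justified: the boundary insertion and the interior insertion produce sequences that are misaligned across a macroscopic prefix. Moreover, the option of ``re-coupling across it'' when later insertions land inside the discrepant block is precisely where you need the symmetry argument of \cref{eqd-2} --- that across a founder-free block, the pair of boundary colors is uniformly distributed on ordered pairs of distinct colors, so the conditional law of the interior coloring does not depend on the history outside. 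That is not a ``small additional trick''; it is the key content of the paper's coloring-side lemma, and your sketch does not supply a substitute. Finally, even granting localization, you would still need the order-side statement that the restriction of $\L$ to a window with no founders there is uniformly random regardless of $w$ (\cref{cond-unif}) to identify the limit as the $w^*$ law; your sketch leans on \cref{col-cons} for this identification, but that only applies once you already know the limiting window law is the $w^*$ law, which is the thing to be proved. Your opening algebraic observation (rewriting the $\WI_w$ recurrence with the $\WI_{w^*}$ recurrence as the consistent part plus a boundary correction of size $|w-w^*|$) is correct and gives good intuition, and your count of $\Theta(\log n)$ disagreement steps matches the $\Theta(\log n)$ founder count in \cref{log}; but the route from those facts to agreement of colorings on $[-m,m]$ is not closed.
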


The second important choice of weight is $w=1$.  To explain the significance
of this case, we first observe that there is a random total order of the
interval naturally associated to the WI coloring, which records the order in
which the color insertions took place.

To define the random order formally, it is convenient to work first on
$[n]=[1,n]$ and iteratively construct the equivalent permutation of $[n]$,
which will be denoted $\Pi^{[n]}$. Set $\Pi^{[1]}=(1)=\id\in S_1$, and, given
$\Pi^{(n)}=(\Pi_1,\ldots,\Pi_n)\in S_n$, let $\Pi^{(n+1)}$ be obtained by
inserting $n+1$ at the same location that the new color was inserted:
$$\Pi^{[n+1]}:=(\Pi_1,\ldots,\Pi_{I-1},n+1,\Pi_I,\ldots,\Pi_n).$$

The \df{WI order} on $[1,n]$ is the random total order $\L=\L^{[n]}$ defined
by $i\L j$ if and only if $\Pi^{[n]}_i <\Pi^{[n]}_j$.  The \df{WI model} with
parameters $(q,w)$ on $[1,n]$ specifies the joint law of the coloring
$X=X^{[n]}$ and the order $\L=\L^{[n]}$.  On an interval $[a,b]$ with
$b-a+1=n$, we similarly define the joint law of $(X^{[a,b]},\L^{[a,b]})$ by
setting $X^{[a,b]}=X^{[n]}$ (as before), and $i\L^{[a,b]}j$ if and only if
$i-a+1 \L^{[n]} j-a+1$.

On given interval, observe that the law of the order $\L$ depends on $w$ but
not on $q$, while the conditional law of the coloring $X$ given $\L$ depends
on $q$ but not on $w$.  We investigate both of these laws below. First we
note the following.

\begin{lemma}
When $w=1$, the WI order on $[a,b]$ is a uniformly random total order on
$[a,b]$.  (In particular, for $[a,b]\subseteq[A,B]$ we have the consistency
$\L^{[A,B]}|_{[a,b]}\eqd \L^{[a,b]}$).
\end{lemma}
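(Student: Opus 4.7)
The plan is to show that when $w=1$ the insertion procedure becomes the standard ``uniform random insertion'' construction of a uniformly random permutation, and then deduce uniformity of the order and its restriction-consistency.

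First I would observe that when $w=1$, the biased weight at the endpoints disappears: the formula $\P(I=i) = 1/(2w+n-1)$ becomes $1/(n+1)$ for every $i \in [1,n+1]$. Thus, conditional on $\Pi^{[n]}$, the permutation $\Pi^{[n+1]}$ is obtained from $\Pi^{[n]}$ by inserting the new element $n+1$ at one of the $n+1$ possible positions chosen uniformly at random and independently of everything preceding.

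Next I would prove by induction on $n$ that $\Pi^{[n]}$ is uniformly distributed on $S_n$. The base case $n=1$ is trivial since $|S_1|=1$. For the inductive step, given that $\Pi^{[n]}$ is uniform on $S_n$, the pair $(\Pi^{[n]}, I)$ is uniform on $S_n \times [1,n+1]$, a set of size $(n+1)!$. The map $(\pi, i) \mapsto (\pi_1,\ldots,\pi_{i-1},n+1,\pi_i,\ldots,\pi_n)$ from $S_n \times [1,n+1]$ to $S_{n+1}$ is a bijection, since from any $\sigma \in S_{n+1}$ one can recover $i$ as the position of $n+1$ in $\sigma$, and then $\pi$ by deleting that entry. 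Hence $\Pi^{[n+1]}$ is uniform on $S_{n+1}$. This is the standard bijective argument for building a uniform random permutation by sequential insertion.

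Since the map $\pi \mapsto {\L}$ defined by $i \L j \iff \pi_i < \pi_j$ is a bijection between $S_n$ and the set of total orders on $[1,n]$, uniformity of $\Pi^{[n]}$ immediately yields that $\L^{[n]}$ is a uniformly random total order on $[1,n]$. The definition of $\L^{[a,b]}$ via the shift $i \mapsto i-a+1$ then gives that $\L^{[a,b]}$ is a uniformly random total order on $[a,b]$.

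Finally, for the consistency statement, I would use the elementary fact that if ${\L}$ is a uniformly random total order on a finite set $S$ and $S' \subseteq S$, then the restriction ${\L}|_{S'}$ is a uniformly random total order on $S'$ (each total order on $S'$ is the restriction of exactly $|S|!/|S'|!$ orders on $S$, which is independent of the particular order chosen on $S'$). Applying this with $S=[A,B]$ and $S'=[a,b]$ yields $\L^{[A,B]}|_{[a,b]} \eqd \L^{[a,b]}$. No step here is really an obstacle; the only thing to be careful about is to record the bijection in the inductive step so that uniformity transfers cleanly, and to note that the $q$-dependence of the insertion procedure (the choice of color $Z$) is irrelevant for the order, which is why the statement concerns only $w$.
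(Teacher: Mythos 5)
Your proof is correct and is essentially the same argument as the paper's one-line proof, which simply observes that when $w=1$ the insertion point $I$ is uniform on $[1,n+1]$; you have just spelled out the standard sequential-insertion bijection $(\pi,i)\mapsto(\pi_1,\ldots,\pi_{i-1},n+1,\pi_i,\ldots,\pi_n)$ and the restriction-of-uniform-order fact that the paper leaves implicit.
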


\begin{proof}
This is immediate from the iterative description, since the insertion point
$I$ is uniformly distributed on $[1,n+1]$.
\end{proof}

It is easy to check that consistency of the order does not hold for any other
weight.  Since $1\neq w^*$, it is interesting that consistency cannot hold
simultaneously for both the coloring and the order.

We now consider the law of the WI order in more detail.  For any total order
$\l$ on an interval $[a,b]$, we define the set of \df{founders} to be
\begin{align*}
\F(\l):=\{i\in[a,b]: \quad  i\l j &\;\forall\; j<i, \\
 \text{or }i \l j &\;\forall\; j>i\}.
\end{align*}
If we regard $i$ as a point in the plane with horizontal coordinate $i$ and
vertical coordinate given by its position in the order $\l$, the founders are
those points whose lower-left or lower-right quadrant contains no other
points; see \cref{founders}.  (Also, $\F(\l)$ is the set of indices at which
the inverse of the associated permutation attains a historical minimum or
maximum). \setlength{\unitlength}{0.01\textwidth}
\begin{figure}\centering
  \begin{picture}(0,0)
    \put(-4,13){$\triangledown$}
    \put(0,0){\line(1,0){66}}
    \put(0,0){\line(0,1){28}}
  \end{picture}
  \includegraphics[width=0.65\textwidth]{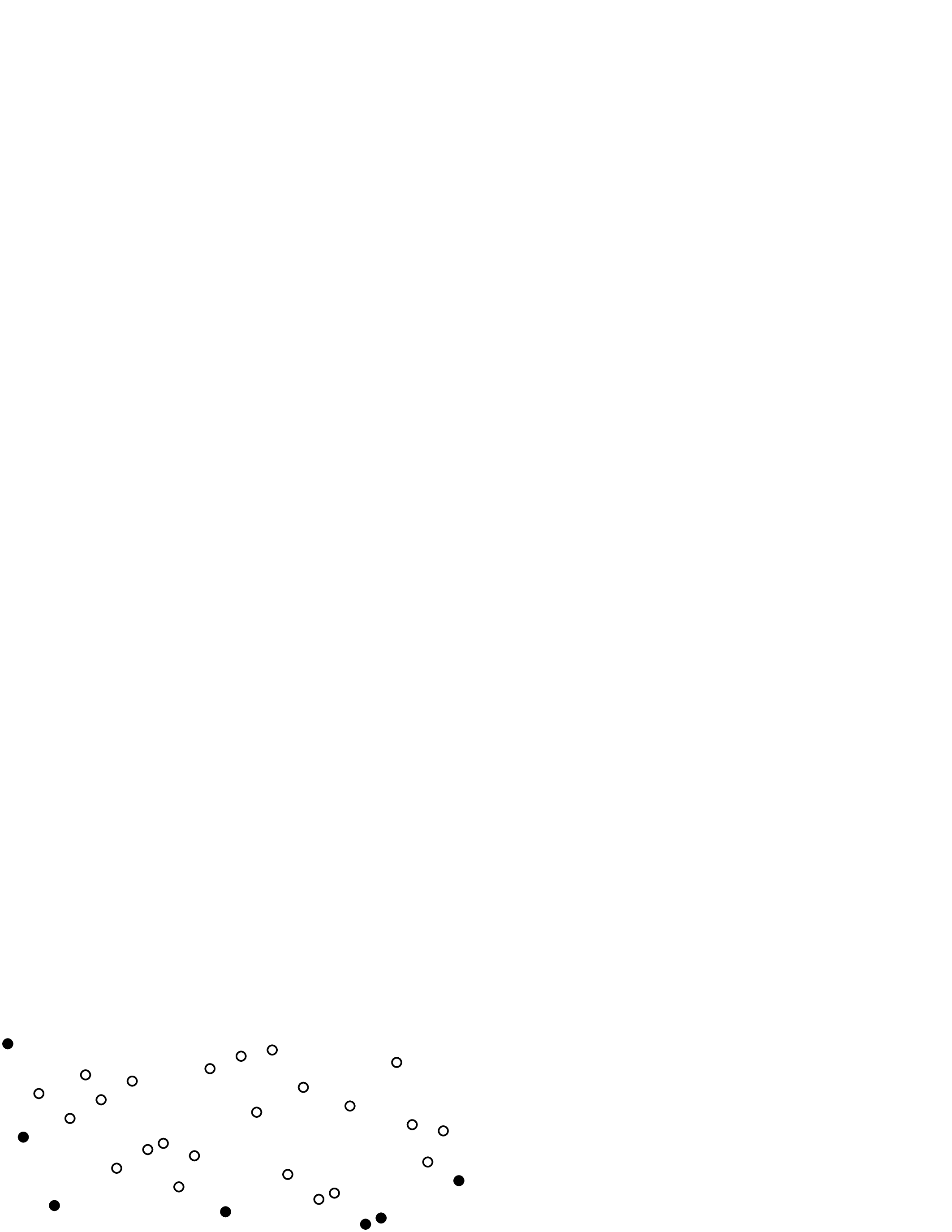}\\
  \begin{picture}(65,3)
    \put(0,0){$1$}
    \put(63,0){$30$}
  \end{picture}
\caption{A random total order $\L$ of the interval $[1,30]$,
with founders shown as filled discs, and other elements as unfilled discs.}
\label{founders}
\end{figure}

In the iterative description for the WI model on $[1,n]$, the founders are
the indices $i$ at which the color $X_i$ was inserted at an endpoint of the
interval during the relevant insertion step (including the case of the first
color to be chosen). An immediate consequence is that the law of the random
WI order $\L$ on $[1,n]$ is given by
\begin{equation}\label{partition}
\P(\L=\l)=\frac{w^{|\F(\l)|}}{Z(w,n)},
\end{equation}
where $\l$ is any of the $n!$ deterministic total orders on $[1,n]$, and
$Z(w,n)$ is an appropriate normalizing constant.

Our next goal is to prove the following analogue of \cref{limit-col} for WI
orders.  This time, $w=1$ is the attracting fixed point.

\begin{prop}\label{limit-unif}
Fix $w>0$ and $m\geq 1$. For $n>m$, let $\L^n$ be WI order on $[-n,n]$.  As
$n\to\infty$, the restriction $\L^n|_{[-m,m]}$ converges in law to a
uniformly random total order on $[-m,m]$.
\end{prop}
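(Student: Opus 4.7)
The plan is to use the explicit formula \eqref{partition}, $\P(\L^n = \l) = w^{|\F(\l)|}/Z(w,2n+1)$, together with a rank-shuffling symmetry inside the window. Write $W = [-m,m]$ and define the event $A^* = \{\l : \F(\l) \cap W = \emptyset\}$ that no window position is a founder of $\l$. The pivotal observation I plan to establish is that on $A^*$, the value of $|\F(\l)|$ is preserved by any permutation of the ranks within $W$ that leaves fixed the set $\pi(W)$ and the outside-rank assignment: for $i \notin W$, membership $i \in \F(\l)$ depends on whether $\pi(i)$ equals $\min\pi([-n,i])$ or $\min\pi([i,n])$, and each of those minima depends on the corresponding prefix or suffix only through its multiset of ranks, which is stable under within-$W$ permutations. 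Consequently the bijection reassigning window ranks shows that the sum $\sum_{\l \in A^*,\, \l|_W = \l_0} w^{|\F(\l)|}$ is independent of the target order $\l_0$. Using non-negativity and $\sum_{\l_0}\P(\L^n|_W = \l_0) = 1$, this will give the pointwise bound
\[
\Bigl|\,\P(\L^n|_W = \l_0) - \tfrac{1}{(2m+1)!}\,\Bigr| \leq \P\bigl(\F(\L^n) \cap W \neq \emptyset\bigr),
\]
reducing the lemma to showing that the right-hand side tends to zero.

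For that, I will use the iterative WI insertion description. A position $j$ is a leftmost-founder of $\l$ (that is, $\pi(j) = \min\pi([-n,j])$) if and only if the element at $j$ was inserted into the leftmost available gap ($I_k = 1$) at its step $k$; any later insertion placed to its left necessarily carries a strictly larger rank. By the symmetric statement for rightmost-founders,
\[
\E\,|\F(\L^n) \cap W| \leq 2 \sum_{k=1}^{2n+1} \P(I_k = 1)\,\P\bigl(P_k \in W \,\bigm|\, I_k = 1\bigr),
\]
where $P_k$ is the final position of the step-$k$ element. Conditional on $I_k = 1$, the count of later insertions that end up to the left of this element evolves as a Polya urn with ghost composition $(w,\,w+k-1)$, so $P_k/(2n+1)$ converges in distribution to $\mathrm{Beta}(w, w+k-1)$ as $n \to \infty$. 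A routine density estimate (bounded for small $k$, decaying like $k^w 2^{-k}$ for large $k$) yields $\P(P_k \in W \mid I_k = 1) = O(m/n)$ together with the convergence of $\sum_k \P(I_k = 1)\, f_{\mathrm{Beta}(w,w+k-1)}(1/2)$. Hence $\E|\F(\L^n) \cap W| = O(m/n) \to 0$.

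The hard part will be verifying the Polya-urn density estimate uniformly in $k$. One must separately handle bounded $k$, where a direct evaluation of the Beta density at $1/2$ gives a $w$-dependent constant, and large $k$, where $\mathrm{Beta}(w, w+k-1)$ concentrates near zero at rate $w/k$, forcing its density at the fixed point $1/2$ to decay superpolynomially. Once these uniform estimates are in hand, the remainder of the argument is routine bookkeeping with the iterative insertion structure.
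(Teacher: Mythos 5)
Your proposal tracks the paper's high-level structure: the key decomposition is the same, namely conditioning on the event $A^*$ that the window $W=[-m,m]$ contains no founders, showing that conditionally on $A^*$ the restricted order is exactly uniform, and then showing $\P(A^*)\to 1$. Your bijection argument for conditional uniformity (permuting ranks inside $W$ while fixing the rank-multiset $\pi(W)$ and everything outside) is essentially identical to the paper's \cref{cond-unif}; note, though, that your verification only addresses founder-membership of $i\notin W$ (via stability of the prefix/suffix rank multisets), whereas one must also check that the rank-reshuffle keeps the image inside $A^*$ (i.e., creates no new founders inside $W$) so that the bijection is well-defined between the two events. That gap is small and closes by observing that on $A^*$ every $k\in W$ has rank exceeding both of the nearest founders flanking $W$, a property invariant under within-$W$ rank permutations; the paper leaves this at a comparable level of detail.

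Where you genuinely diverge is in proving $\P(A^*)\to 1$. The paper's \cref{log} derives a one-step recurrence for $p_{n,k}=\P(k\in\F)$, proves unimodality of $k\mapsto p_{n,k}$ by induction, and separately computes $\E|\F(\L^{[n]})|=s_n=\Theta(\log n)$ via the trivial recursion $s_{n+1}=s_n+2w/(2w+n-1)$; combining unimodality with the sum bound gives $p_{n,k}=O(\log n/n)$ for central $k$. Your route instead decomposes founders by insertion step, identifies a leftmost-founder with an $I_k=1$ insertion, and tracks the subsequent drift of that element's position as a P\'olya urn with initial composition $(w,w+k-1)$, then sums the Beta-Binomial local probabilities $\P(P_k\in W\mid I_k=1)$ against $\P(I_k=1)\sim w/k$. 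This is a correct and more ``pathwise'' alternative: the urn identification is exact (not merely asymptotic), and the convergence of $\sum_k (w/k)\,f_{\mathrm{Beta}(w,w+k-1)}(1/2)$ is driven by the $\asymp k^{w}2^{-k}$ decay you quote. The trade-off is that you replace a one-line expectation recurrence and an easy unimodality induction with a uniform-in-$k$ local estimate for Beta-Binomial distributions, including the regime $k=\Theta(n)$ where the target fraction $n/(2n+1-k)$ is far from the urn's mean $w/(2w+k-1)$ and the Beta approximation is being used in its extreme tail. You flag this as the hard part, and it is; the Beta-Binomial mass function is explicit, so the estimate is obtainable, but I would not call it routine. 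The paper's $\E|\F|$-plus-unimodality argument sidesteps all of this, at the price of a slightly weaker $O(\log n/n)$ rather than your claimed $O(1/n)$ per-site bound, which is irrelevant for the qualitative limit.
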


The following is the only estimate that we need in this section.

\begin{lemma}\label{log}
Fix $w>0$ and $n\geq 1$, and let $\L^{[n]}$ be WI order with weight $w$ on
$[1,n]$. If $n/3 <k <2n/3$ then
$$\P\bigl(k \in \F(\L^{[n]})\bigr) < \frac{c\log n}{n},$$
where $c$ is a constant depending only on $w$.
\end{lemma}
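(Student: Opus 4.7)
My plan is to analyze the WI order via its time-reversal---a weighted removal process---and to embed that process in continuous time so that the three sub-processes ``positions $<k$,'' ``positions $>k$,'' and ``position $k$ itself'' decouple into independent parts. In the reverse description, one starts from $\{1,\ldots,n\}$ and iteratively removes one position at a time, giving the current left and right endpoints weight $w$ and each interior position weight $1$. A direct combinatorial check (using \eqref{partition} and the bijection between insertion sequences and their reversed removal sequences) shows that the resulting order is distributed as $\L^{[n]}$. Under this description, $k$ is a founder iff $k$ is at an endpoint of the then-current sequence at the moment $k$ is removed; equivalently, since once $k$ becomes an endpoint it stays one, $k\in\F$ iff all positions in $[1,k-1]$ or all in $[k+1,n]$ are removed before $k$ itself.

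Embed the reverse process in continuous time: give each present position its own Poisson clock with rate $w$ if it is a current endpoint and rate $1$ if it is interior, removing the position the instant its clock rings. The total rate of a ``left event'' (removing any position in $[1,k-1]$) is then $w+L-1$, depending only on the current left count $L$; likewise the right-event rate is $w+R-1$; and $k$'s own rate is $1$ while interior. Because these three rates depend only on their own state variables, the three pieces---left-count $L(t)$, right-count $R(t)$, and $k$'s clock---can be realized as mutually independent continuous-time processes on the time window $[0,\min(T_L,T_R,T_k))$. Concretely, take independent $T_L=\sum_{j=0}^{k-2}E_j^L$ with $E_j^L\sim\mathrm{Exp}(w+j)$, likewise $T_R=\sum_{j=0}^{n-k-1}E_j^R$, and $T_k\sim\mathrm{Exp}(1)$. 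Then $k\in\F$ iff $T_k>\min(T_L,T_R)$, and by independence of $T_k$,
\[ \P\bigl(k\in\F(\L^{[n]})\bigr)=\P\bigl(T_k>\min(T_L,T_R)\bigr)=\E\bigl[e^{-\min(T_L,T_R)}\bigr]. \]

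To finish, bound $e^{-\min(T_L,T_R)}=\max(e^{-T_L},e^{-T_R})\leq e^{-T_L}+e^{-T_R}$, and use $\E[e^{-\mathrm{Exp}(\lambda)}]=\lambda/(\lambda+1)$ with independence of the $E_j^L$ to telescope:
\[ \E[e^{-T_L}]=\prod_{j=0}^{k-2}\frac{w+j}{w+j+1}=\frac{w}{w+k-1}, \]
and similarly $\E[e^{-T_R}]=w/(w+n-k)$. Hence $\P(k\in\F)\leq w/(w+k-1)+w/(w+n-k)$, which for $n/3<k<2n/3$ is $O(1/n)$---actually a bit stronger than the claimed $O(\log n/n)$.

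The one delicate step is the decoupling, i.e., that the left-count, right-count, and $k$'s clock can be realized as mutually independent continuous-time processes on the relevant window. This is routine once one notes the rate decomposition above, but one should note that the independence holds only on $[0,\min(T_L,T_R,T_k))$---the rate of $k$ would jump from $1$ to $w$ once a side empties---which is nevertheless exactly the window needed to determine whether $k\in\F$.
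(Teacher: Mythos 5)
Your proof is correct, and it takes a genuinely different route from the paper's. The paper works directly with $p_{n,k} := \P\bigl(k \in \F(\L^{[n]})\bigr)$: it writes a recurrence for $p_{n,k}$ by conditioning on the last insertion, establishes unimodality of $p_{n,k}$ in $k$ by induction, bounds $s_n = \sum_k p_{n,k} = \E|\F(\L^{[n]})|$ by $O(\log n)$ via a second (telescoping) recurrence, and then combines the two by observing that the sum of the $\lceil n/3\rceil$ largest values of $p_{n,\cdot}$ is at least $(n/3)\,p_{n,\lceil n/3\rceil}$. Your argument instead reverses time (the bijection between insertion histories and removal histories does preserve the $w^{|\F|}$ weight, as \eqref{partition} makes easy to check), Poissonizes the removal process, and exploits the decoupling of the aggregate left rate $w+L-1$, right rate $w+R-1$, and $k$'s rate $1$ into three independent pieces on the window $[0,\min(T_L,T_R,T_k))$ --- which, as you correctly flag, is exactly the window that decides $k\in\F$. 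This gives a cleaner and strictly sharper conclusion: $\P(k\in\F)\le w/(w+k-1)+w/(w+n-k)$, so for central $k$ the true order is $\Theta(1/n)$ and the paper's $\log n$ factor is spurious (it is an artifact of bounding a single central term by the average of a third of the range). The one place that deserves a sentence of care --- and you have supplied it --- is that the independence of the three component processes, and the identity $\{k\in\F\}=\{T_k>\min(T_L,T_R)\}$, are coupling statements that hold only up to the first of the three times to occur; since the event is measurable with respect to which of the three occurs first, that suffices.
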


\begin{proof}
Fix $w$, and write $p_{n,k}:=\P(k \in \F(\L^{[n]}))$.  By symmetry we have
$p_{n,n+1-k}=p_{n,k}$, and $p_{n,1}=p_{n,n}=1$ since the endpoints of an
interval are always founders.  Considering the last step of the insertion
procedure gives the recurrence
$$p_{n+1,k}=\frac{1}{2w+n-1}\bigl[
(w+k-2)p_{n,k-1} + (w+n-k)p_{n,k} \bigr]$$ for $2\leq k \leq n$. (Here the
two terms on the right reflect the possibilities that the insertion is
before, of after, location $k$; if the insertion is \textit{at} $k$ then $k$
is not a founder). A straightforward induction then shows that $p_{n,k}$
is unimodal in $k$:
\begin{equation}\label{uni}
p_{n,k}\geq p_{n,k+1},\quad k<\tfrac n2.
\end{equation}

On the other hand, writing $s_n:=\sum_i p_{n,i}=\E
|\F(\L^{[n]})|$, we obtain similarly $s_1=1$ and
$$s_{n+1}=s_n+\frac{2w}{2w+n-1},$$
and hence
\begin{equation}\label{sum}
s_n \leq c_1 \log n
\end{equation}
for some $c_1=c_1(w)$.

Now if $n/3<k<2n/3$ then \eqref{sum} and \eqref{uni} give
$$c_1 \log n \geq s_n \geq
\sum_{i=1}^{\lceil n/3\rceil} p_{n,i} \geq \tfrac n3\,
p_{n, \lceil n/3\rceil},$$
 so $p_{n, \lceil n/3\rceil}\leq (3c_1 \log n )/n$, and
the result follows from \eqref{uni}.
\end{proof}

\begin{cor}\label{no-found}
Fix $w>0$ and $m\geq 1$.  For $n\geq m$, let $\L^{n}$ be the WI order with
weight $w$ on $[-n,n]$.  We have
$$\P\bigl(\F(\L^{n})\cap[-m,m]=\emptyset)\to 1 \qquad\text{as }n\to\infty.$$
\end{cor}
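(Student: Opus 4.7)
The plan is to derive this directly from \cref{log} by a union bound. By definition, the WI order $\L^n$ on $[-n,n]$ is equal in law to the WI order $\L^{[N]}$ on $[1,N]$ with $N=2n+1$, under the shift $i\mapsto i+n+1$. Under this shift, the set $[-m,m]$ corresponds to $[n+1-m,\,n+1+m]$, which for $n$ sufficiently large (specifically $n>3m+1$) lies strictly inside the middle third $(N/3,\,2N/3)$ of $[1,N]$.

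Thus for all sufficiently large $n$, \cref{log} applies to each of the $2m+1$ positions in this range, yielding
$$\P\bigl(k+n+1\in \F(\L^{[N]})\bigr) < \frac{c\log N}{N}$$
for every $k\in[-m,m]$, with $c=c(w)$. A union bound gives
$$\P\bigl(\F(\L^{n})\cap[-m,m]\neq\emptyset\bigr) \leq (2m+1)\cdot \frac{c\log N}{N},$$
which tends to $0$ as $n\to\infty$ since $m$ is fixed.

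Since this argument is entirely a matter of reindexing and applying a union bound to the already-proven \cref{log}, there is no substantive obstacle; the only thing to verify is that the range $[n+1-m,n+1+m]$ is eventually contained in the middle third, which is elementary.
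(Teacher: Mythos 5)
Your proof is correct and takes exactly the approach the paper uses: the paper's own proof is the one-liner ``This follows from \cref{log} by a union bound,'' and you have simply spelled out the reindexing and the middle-third check (your threshold $n>3m+1$ is the binding one). Nothing further to add.
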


\begin{proof}
  This follows from \cref{log} by a union bound.
\end{proof}

\begin{lemma}\label{cond-unif}
Fix $w>0$, and integer intervals $[a,b]\subseteq[A,B]$.  Let $\L$ be the WI
order on $[A,B]$.  Given the event $\F(\L)\cap[a,b]=\emptyset$, the
conditional law of the restriction $\L|_{[a,b]}$ is the uniform measure on
total orders of $[a,b]$.
\end{lemma}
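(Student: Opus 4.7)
The plan is to use the explicit formula $\P(\L = \l) = w^{|\F(\l)|}/Z(w, N)$ from \eqref{partition}, together with a decomposition of each total order $\l$ on $[A,B]$ into an \emph{anonymized} sequence $S(\l)$ --- obtained by replacing every label $i \in [a,b]$ by a common placeholder $\star$ --- and a bijection $\sigma$ specifying which inside label sits at each $\star$-position. Writing $m = b-a+1$, the restriction $\L|_{[a,b]}$ is determined by $\sigma$, so the task reduces to showing that conditional on $E := \{\F(\L) \cap [a,b] = \emptyset\}$, the bijection $\sigma$ is uniform on its $m!$ values.

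The first step is to observe that on $E$ the weight $w^{|\F(\l)|}$ is a function of $S(\l)$ alone. Inside founders are excluded by $E$, so only outside founders contribute. For an outside index $k$, the founder conditions compare $k$ with the indices of $[A, k-1]$ and $[k+1, B]$; a case split on $k<a$ versus $k>b$ shows that one of these ranges is entirely outside (whose relative order is recorded in $S$), while the other decomposes into outside indices together with all of $[a,b]$, so the comparison with the inside reduces to comparing the position of $k$ with the $\star$-positions in $S$ --- not to which label sits at which $\star$.

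The second and crucial step is to show that $E$ itself is a function of $S$: for each $S$, either every labeling $\sigma$ places $\l$ in $E$, or none does. Let $p_1 < \cdots < p_m$ be the $\star$-positions. For the first $\star$, carrying some inside label $\sigma(1)$, to avoid being a left-founder, some outside index from $[A, a-1]$ must appear before $p_1$ in $S$: no inside element with smaller integer label exists yet to act as a witness, so the obstruction can only come from the outside. A symmetric statement holds on the right. This requirement depends only on $S$; and once it is satisfied at $p_1$, the same outside witnesses lie before every later $\star$-position $p_k$ as well, placing no further constraint on $\sigma$.

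Combining the two steps, conditional on $E$ and $S(\L) = S$ the weight $w^{|\F(\l)|}$ is constant in $\sigma$ and all $m!$ labelings are admissible, so $\sigma$ is uniform; marginalising over $S$ preserves the uniformity and gives the claim. The main obstacle is the second step --- recognising the all-or-nothing dichotomy, which rests on the observation that the very outside witnesses needed to prevent the first $\star$ from being a founder automatically pre-empt all later $\star$s.
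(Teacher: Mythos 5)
Your proof is correct and rests on the same two pillars as the paper's: the representation \eqref{partition} of the law of $\L$, and the invariance of the founder set under reshuffling the inside elements of $[a,b]$. The paper realizes this invariance via a founder-preserving bijection induced by a single transposition of inside labels, while you realize it via a global factorization of the order into an anonymized pattern $S$ (positions of outside labels and of $\star$'s) and a labeling $\sigma$; these are two bookkeeping schemes for the same symmetry, since transpositions generate $S_m$. One thing your version makes explicit that the paper leaves implicit is the verification that the conditioning event $E=\{\F(\L)\cap[a,b]=\emptyset\}$ is itself determined by $S$ alone --- your ``all-or-nothing'' step --- which is needed, in the paper's phrasing, to check that the transposition bijection actually carries orders satisfying $E$ to orders satisfying $E$. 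Your observation that the earliest-arriving inside element's founder status is governed purely by outside witnesses, and that these same witnesses automatically pre-empt every later inside element, is exactly the right justification for the paper's one-line assertion that the bijection ``preserves the set of founders,'' and your step 1 likewise correctly reduces the outside founder count to a function of $S$ by noting that the inside block is opaque: only the $\star$-positions matter. So this is a correct, slightly more spelled-out rendering of the same argument.
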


\begin{proof}
Consider the representation \eqref{partition} of the law of $\L$.  Let
$\l_1,\l_2$ be two total orders on $[a,b]$ that differ by a single transposition.
The two events
$$\bigl\{\F(\L)\cap[a,b]=\emptyset,\;
\L|_{[a,b]}=\l_i\bigr\},\qquad i=1,2$$ correspond to two (disjoint) sets of
total orders on $[A,B]$.  There is an explicit bijection between these sets
that preserves the set of founders: we simply exchange the positions of the
two transposed elements of $[a,b]$ within the order on $[A,B]$.
\end{proof}

\begin{proof}[Proof of \cref{limit-unif}]
This is an immediate consequence of \cref{no-found,cond-unif}.
\end{proof}

We now shift our focus to the conditional law of the WI coloring given the
total order. For a total order $\l$ on an interval $[a,b]$, define functions
$L=L_\l$ and $R=R_\l$ on $[a,b]$ by
\begin{equation}\label{local-nbrs}
\begin{aligned}
L(i)&:=\max\{j<i: j\l i\}; \\
R(i)&:=\min\{j>i: j\l i\},
\end{aligned}
\end{equation}
where $\min\emptyset=\max\emptyset:=\infty$. Note that the founders of $\l$
are the elements $i$ for which $L(i)$ or $R(i)$ is infinite.

By considering the insertion procedure for the WI model on $[a,b]$, we
seethat the conditional law of the coloring given the order $\L$ can be
expressed as follows.  We assign colors to the elements of $[a,b]$ one by
one, in the order $\L$.  Conditional on the previous choices, the color $X_i$
assigned to $i$ is chosen uniformly at random from the set
$$[q]\setminus\{X_{L(i)},X_{R(i)}\}.$$
(The set has size $q$ at the first step, and subsequently has size $q-1$ if
$i$ is a founder, and otherwise $q-2$.  Of course, $L(i)$ and $R(i)$
correspond to the neighbors of $i$ when it was inserted.)

The above sequential coloring may be done in other orders.  Specifically,
consider the directed graph $G=G(\L)$ with vertex set $[a,b]$ and with edges
from $i$ to $L(i)$, and from $i$ to $R(i)$, for each $i$, wherever these
quantities are finite.  There is a {\em partial} order $\prec$ on $[a,b]$
generated by the set of inequalities $\{i\succ j: G \text{ has an edge from
}i \text{ to }j\}$.  Then the above sequential coloring procedure may be done
in any order that is a linear extension of $\prec$.  The resulting
conditional law is the same for all such linear extensions.

The graph $G$, and a coloring, are illustrated in \cref{cactus}.  It is
helpful to draw vertex $i$ with horizontal coordinate $i$ and vertical
coordinate given by its position in $\L$, as before.  Then $G$ has a
``lower'' path composed of all the founders, with edges directed inwards
along the path towards the earliest element in the order.  On each edge of
this path, there is a structure built of triangles, each with its base on an
existing edge (starting with the edge of the path) and its apex above that
edge.
\begin{figure}\centering
  \includegraphics[width=0.65\textwidth]{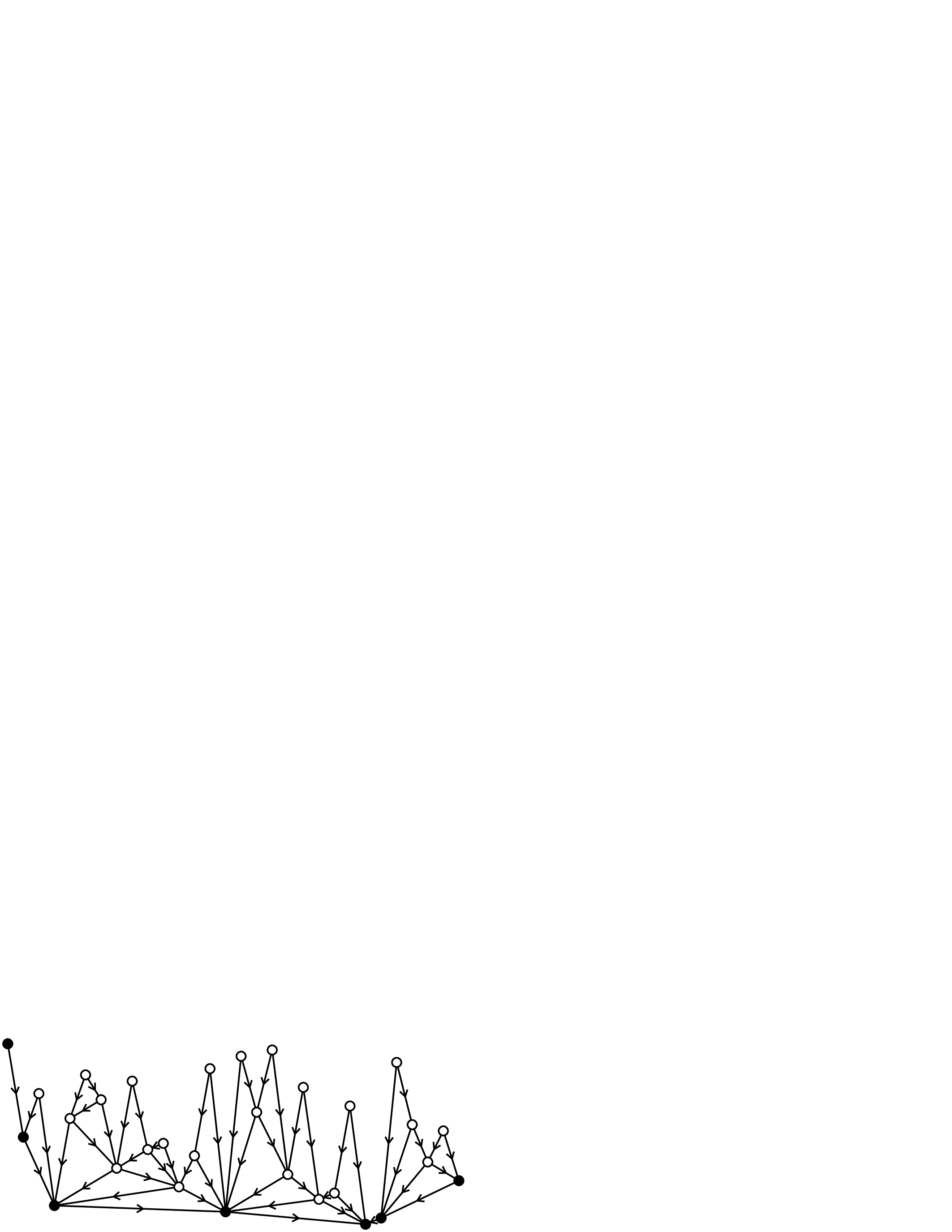}\\
  \includegraphics[width=0.65\textwidth]{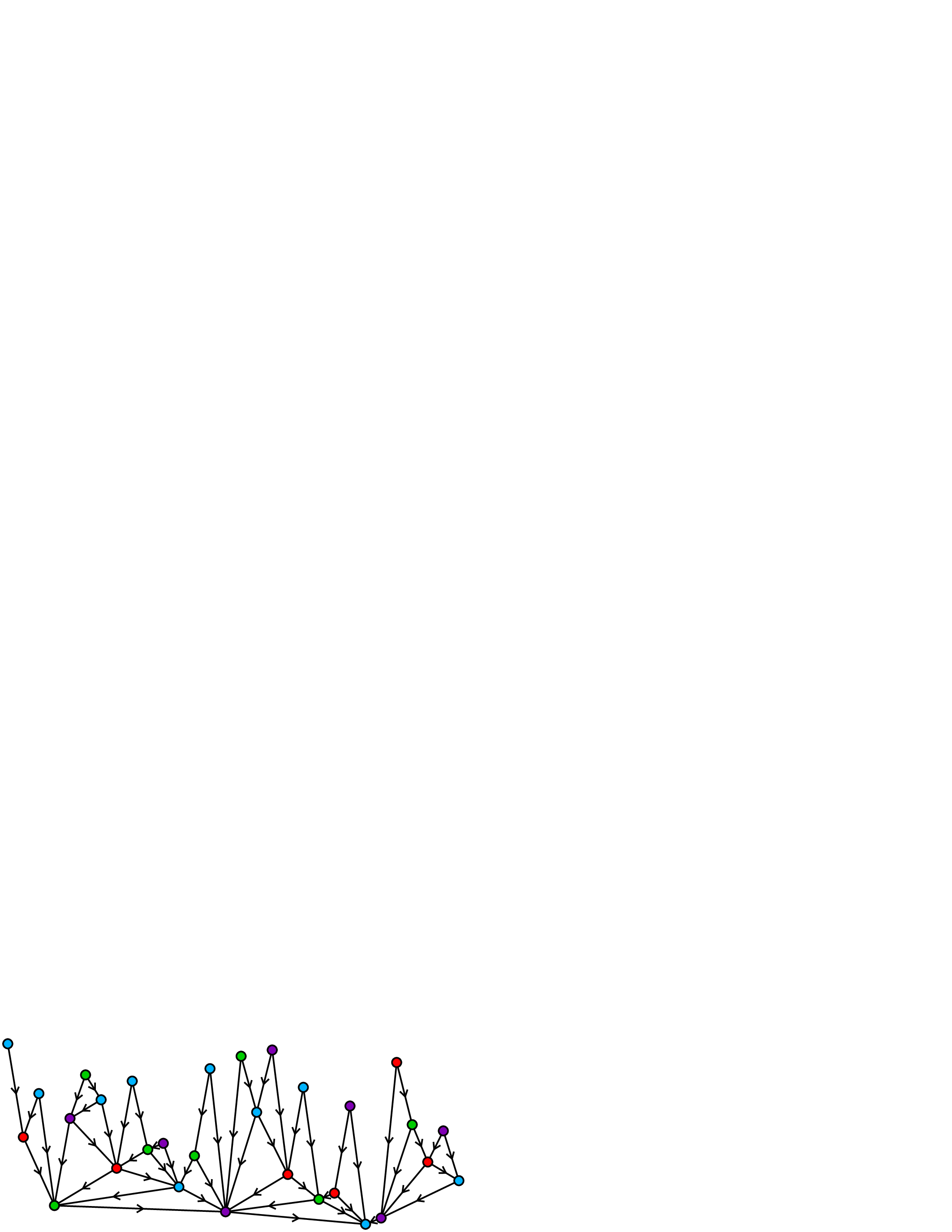}\\
  \includegraphics[width=0.65\textwidth]{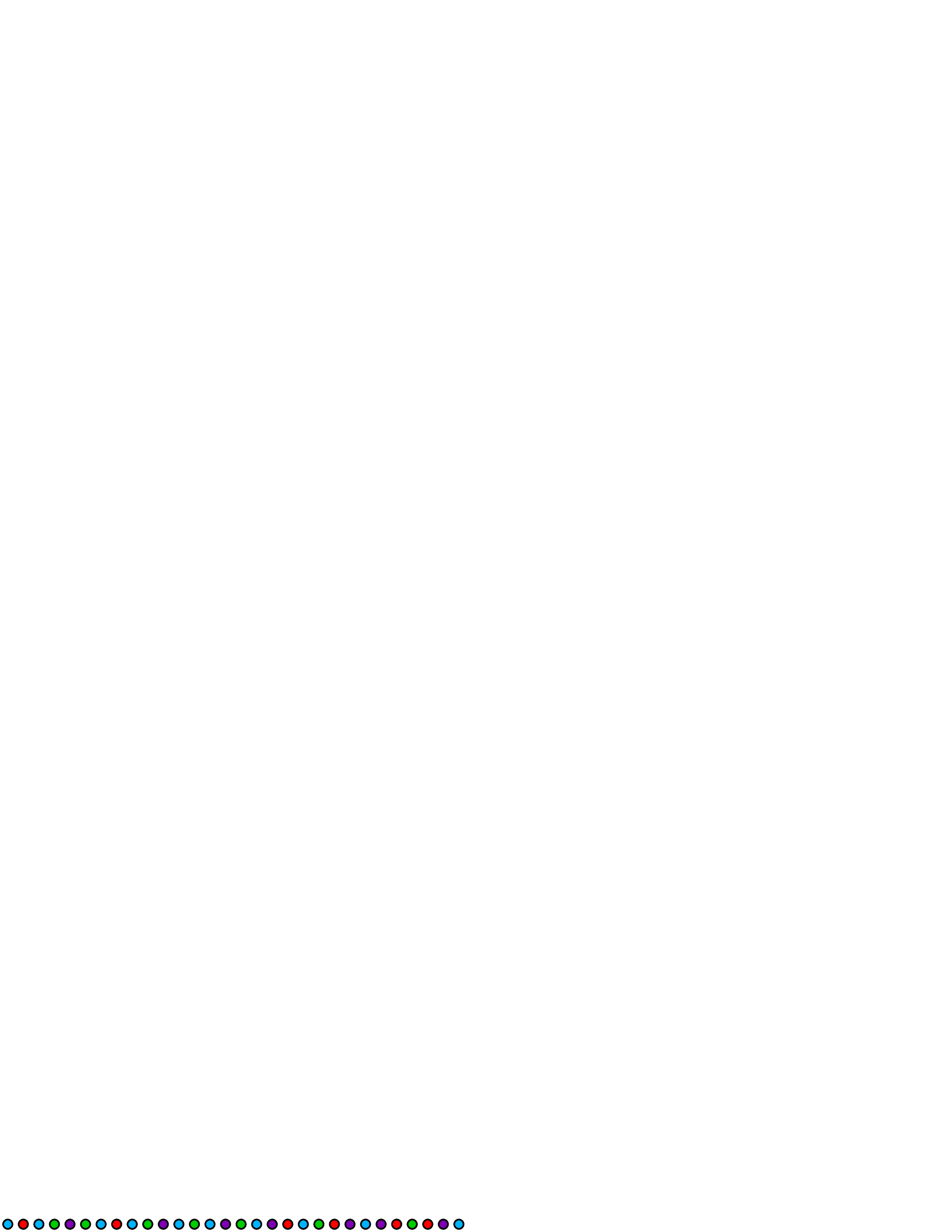}
\caption{{\em Top:} the directed graph $G$ corresponding to a random
 total order on the interval $[1,30]$.  Founders are shown as filled discs.
   {\em Middle:} A $4$-coloring of $G$.  {\em Bottom:}
   The resulting $4$-coloring of the interval. }\label{cactus}
\end{figure}

The conditional law has the following Markovian property.

\begin{lemma}\label{eqd-2}
Fix $q\geq 3$, and consider integer intervals $[a,b]\subseteq[A,B]$.  Let
$\l,\l\,'$ be deterministic total orders on $[A,B]$, and let $X,X'$ be random
colorings on $[A,B]$ chosen according to the respective conditional laws
given $\l,\l\,'$ under the WI model with $q$ colors. Suppose that:
\begin{ilist}
  \item
   $\l|_{[a,b]}=\l\,'|_{[a,b]}$, and
  \item
   $\mathcal{F}(\l|_{[a,b]})=\{a,b\}$.
\end{ilist}
Then $X|_{[a,b]}\eqd X'|_{[a,b]}$.
\end{lemma}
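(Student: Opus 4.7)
The plan is to show that the conditional law of $X|_{[a,b]}$ given $\l$ depends on $\l$ only through $\sigma:=\l|_{[a,b]}$, whence by (i) the same is true for $\l\,'$. The first step is structural: using condition (ii), and (by symmetry) assuming $a$ precedes $b$ in $\sigma$, I claim that $a$ and $b$ are the $\l$-first and $\l$-second elements of $[a,b]$. Indeed, if some $i_2\in(a,b)$ were second, then its only $[a,b]$-predecessor would be $a$, so $R_\sigma(i_2)=\infty$, making $i_2$ a founder of $\sigma$ and contradicting (ii). From this I deduce $L_\l(b)=a$, since no $j\in(a,b)$ is $\l$-earlier than $b$ while $a\l b$ and $a<b$.

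Next, for each $i\in(a,b)$, condition (ii) gives $L_\sigma(i),R_\sigma(i)\in[a,b]$ finite, and I would verify $L_\l(i)=L_\sigma(i)$ and $R_\l(i)=R_\sigma(i)$: any candidate $j$ with $L_\sigma(i)<j<i$ in $[A,B]$ necessarily lies in $[a,b]$ and so cannot be $\l$-earlier than $i$ by the definition of $L_\sigma(i)$. Consequently, when we generate $X$ by coloring in $\l$-order (a valid linear extension of $\prec$), every element $i\in(a,b)$ is assigned a color uniformly from $[q]\setminus\{X_{L_\sigma(i)},X_{R_\sigma(i)}\}$, using only data inside $[a,b]$.

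The key observation is a color-permutation symmetry: the sequential procedure is invariant under simultaneously applying any $\pi\in S_q$ to all colors, hence $\pi\circ X\eqd X$ conditional on $\l$. In particular the joint law of $(X_a,X_b)$ is $S_q$-equivariant, so $\P(X_a=c_1,X_b=c_2)$ depends only on whether $c_1=c_2$. But when $b$ is colored, $a$ has already been colored (since $L_\l(b)=a$), and $X_b$ is drawn from $[q]\setminus\{X_a,X_{R_\l(b)}\}$; so $X_a\neq X_b$ almost surely. Combined with the symmetry this forces $(X_a,X_b)$ to be uniform on ordered pairs of distinct colors in $[q]^2$.

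Putting the pieces together: the marginal law of $(X_a,X_b)$ is uniform on ordered distinct pairs (independent of $\l$ beyond $\sigma$), and conditional on $(X_a,X_b)$, the remaining values $(X_i)_{i\in(a,b)}$ are generated in $\sigma$-order by uniform draws from $[q]\setminus\{X_{L_\sigma(i)},X_{R_\sigma(i)}\}$, a recipe determined by $\sigma$ alone. Hence the law of $X|_{[a,b]}$ given $\l$ depends only on $\sigma$, proving the lemma. The one subtle point is the structural claim $L_\l(b)=a$: without condition (ii) it could fail, as some interior $j\in(a,b)$ might precede $b$ in $\l$ and so wreck the argument. With (ii) in hand, however, the remainder is essentially bookkeeping once the color-permutation symmetry is spotted.
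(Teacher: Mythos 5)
Your proof is correct and follows essentially the same strategy as the paper's: use the color-permutation ($S_q$) symmetry to pin down the law of $(X_a,X_b)$ as uniform on ordered distinct pairs, and then observe that the sequential coloring of $(a,b)$ given $(X_a,X_b)$ is governed entirely by the restricted order $\sigma=\l|_{[a,b]}$, since condition (ii) forces every $i\in(a,b)$ to have both its neighbors $L_\l(i),R_\l(i)$ inside $[a,b]$. The paper phrases the structural facts in terms of the induced subgraph of the directed graph $G(\l)$, whereas you verify them directly from the order (showing $a,b$ are the $\l$-first and $\l$-second elements of $[a,b]$ and that $L_\l(b)=a$), but these are the same observations expressed in different language.
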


\begin{proof}The proof is illustrated in \cref{compare}.
Let $G,G'$ be the directed graphs on $[A,B]$ corresponding to $\l,\l\,'$.
Condition (ii) implies that $G$ has an edge between $a$ and $b$ (in some
direction), and that for all $i\in (a,b)$ we have $i\,\g a$ and $i\,\g b$.
Therefore, the subgraph of $G$ induced by $[a,b]$ includes all the edges of
$G$ that are incident to $(a,b)$, and no edges out of $\{a,b\}$ except the
edge connecting them. By symmetry under permutations of $[q]$, the joint law
$(X_a,X_b)$ must be uniform on the set of $q^2-q$ ordered pairs of unequal
colors.  By (ii), all the same observations apply to $G'$, and the subgraphs
of $G,G'$ induced by $[a,b]$ are identical.  By the sequential coloring
procedure described above, the conditional law of $X|_{(a,b)}$ given
$(X_a,X_b)$ is thus identical to its counterpart for $X'$, concluding the
proof.
\end{proof}
\begin{figure}\centering
  \includegraphics[width=0.65\textwidth]{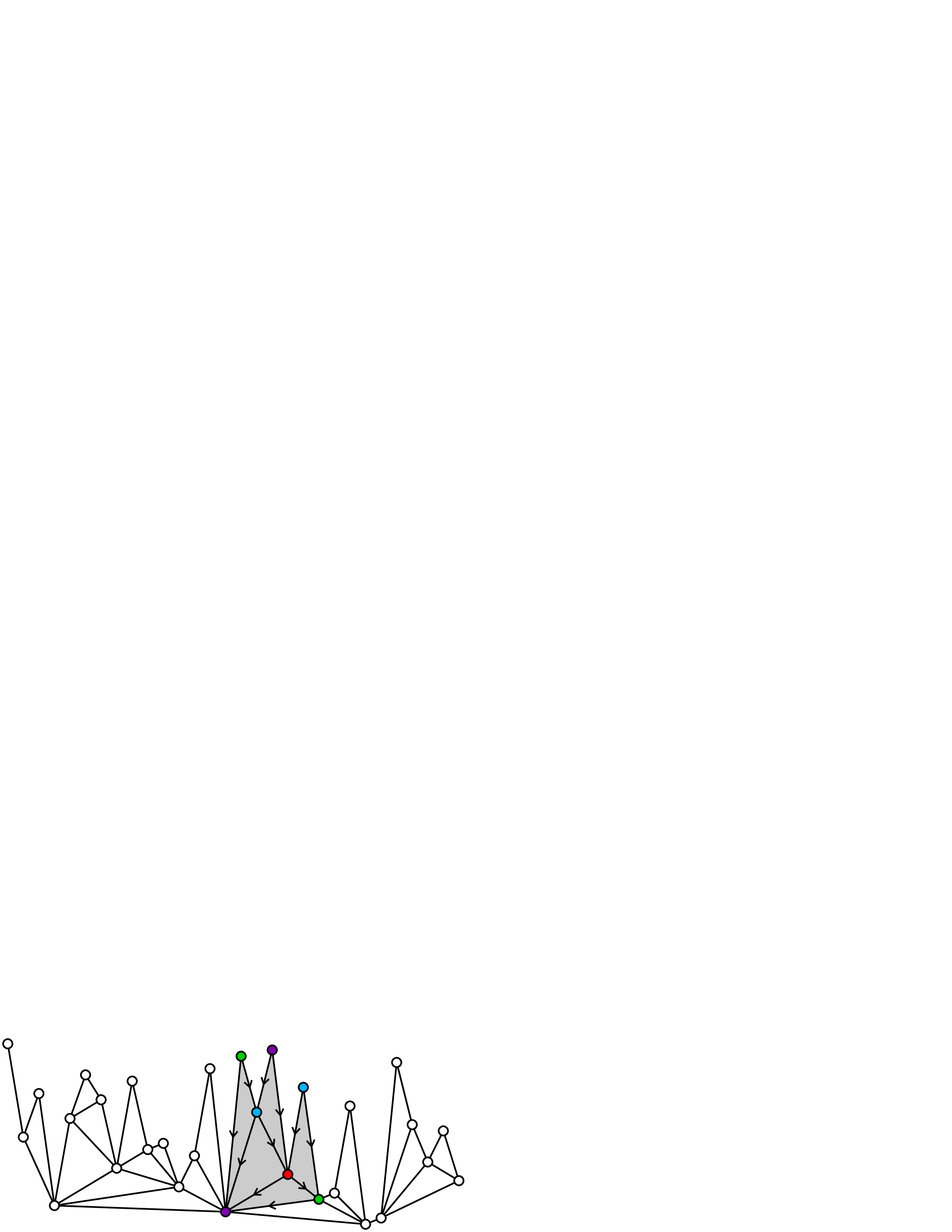}\\
  \includegraphics[width=0.65\textwidth]{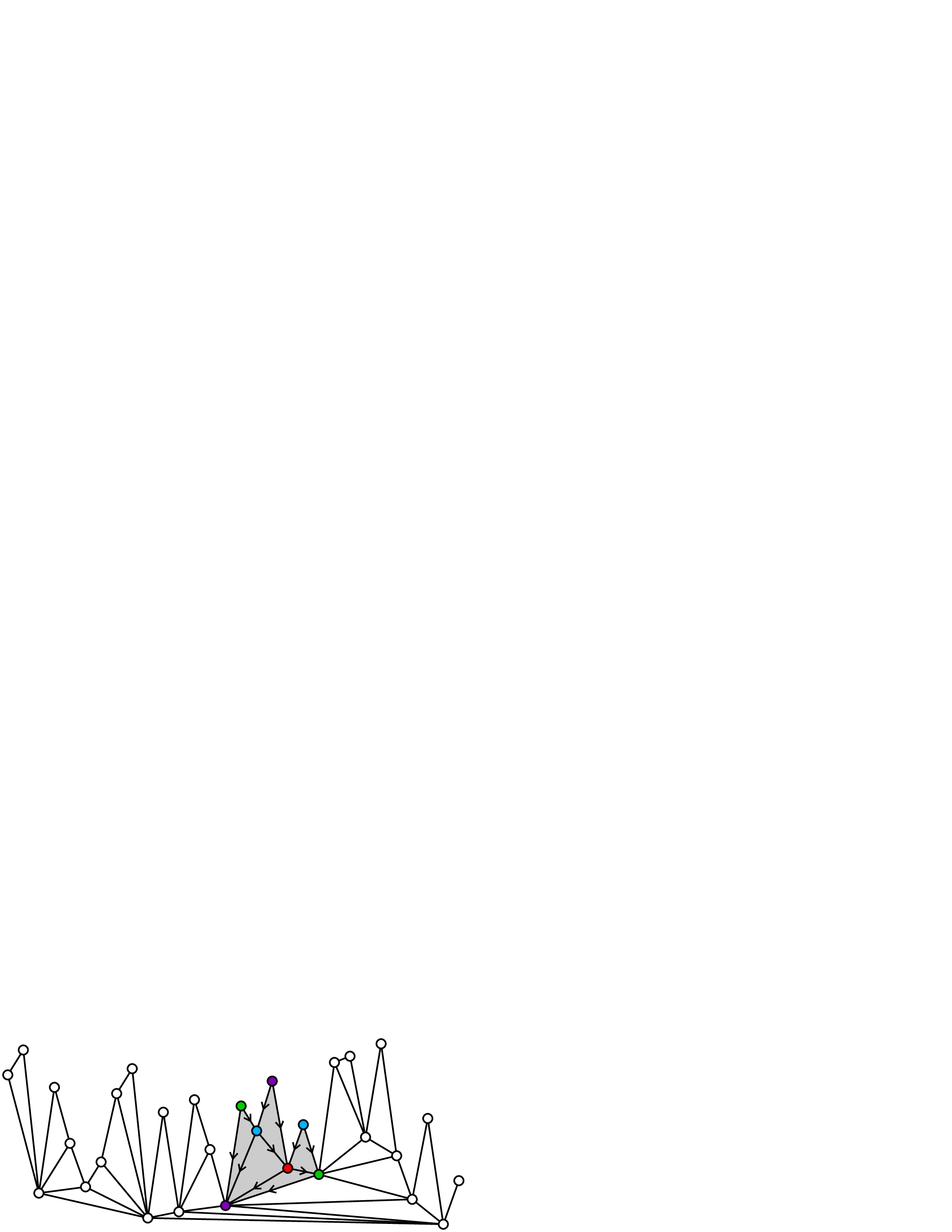}
\caption{An illustration of \cref{eqd-2} and its proof.
The directed graphs associated with two different orders on the interval are shown.
The restrictions to the highlighted subinterval agree, and the only founders of the
restricted order are the endpoints.  Therefore the conditional laws of the colorings
agree on the subinterval.}\label{compare}
\end{figure}

\begin{cor}\label{eqd-3}
Fix $q\geq 3$, and consider integer intervals
$[a,b]\subseteq[A,B]\subseteq[\mathcal{A},\mathcal{B}]$.  Let $\l,\l\,'$ be
deterministic total orders on $[\mathcal{A},\mathcal{B}]$, and let $X,X'$
be random colorings chosen according to their
respective conditional laws.  Suppose that:
\begin{ilist}
  \item
   $\l|_{[A,B]}=\l\,'|_{[A,B]}$, and
  \item
   $\mathcal{F}(\l|_{[A,B]})\cap[a,b]=\emptyset$.
\end{ilist}
Then $X|_{[a,b]}\eqd X'|_{[a,b]}$.
\end{cor}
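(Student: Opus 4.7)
The plan is to reduce to \cref{eqd-2} by sandwiching $[a,b]$ between the nearest founders of $\l|_{[A,B]}$ on either side. Since endpoints are always founders we have $A,B\in\F(\l|_{[A,B]})$, and by hypothesis no element of $[a,b]$ is a founder of $\l|_{[A,B]}$. Let $a'$ be the largest founder of $\l|_{[A,B]}$ in $[A,a-1]$ and $b'$ the smallest founder of $\l|_{[A,B]}$ in $[b+1,B]$. Then $A\leq a'<a\leq b<b'\leq B$ and no element of $(a',b')$ is a founder of $\l|_{[A,B]}$.

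The key combinatorial step is to prove $\F(\l|_{[a',b']})=\{a',b'\}$. Endpoints are trivially founders, so only the reverse inclusion needs work. Suppose $i\in(a',b')$ is a founder of $\l|_{[a',b']}$; by symmetry assume $i$ is the $\l$-minimum of $[a',i]$, so in particular $i\l a'$ and $i\l j$ for all $j\in(a',i)$. Since $a'$ is a founder of $\l|_{[A,B]}$, one of the following must hold: (a) $a'\l j$ for all $j\in(a',B]$, or (b) $a'\l j$ for all $j\in[A,a')$. In case (a) we get $a'\l i$, contradicting $i\l a'$. In case (b), chaining $a'\l j$ for $j\in[A,a')$ with $i\l a'$ gives $i\l j$ for $j\in[A,a')$, and together with $i\l j$ for $j\in(a',i)$ this shows $i$ is the $\l$-minimum of $[A,i]$, making $i$ a founder of $\l|_{[A,B]}$ --- contradicting $i\in(a',b')$. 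A symmetric argument using the founder-ness of $b'$ rules out $i$ being the $\l$-minimum of $[i,b']$.

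With the combinatorial claim established, the orders $\l$ and $\l\,'$ on $[\mathcal{A},\mathcal{B}]$ satisfy the hypotheses of \cref{eqd-2} with $[a',b']$ playing the role of the inner interval: they agree on $[a',b']$ (since they already agree on the larger set $[A,B]\supseteq[a',b']$), and $\F(\l|_{[a',b']})=\{a',b'\}$. The lemma therefore yields $X|_{[a',b']}\eqd X'|_{[a',b']}$, and restricting to $[a,b]\subseteq[a',b']$ gives $X|_{[a,b]}\eqd X'|_{[a,b]}$. The main obstacle is the combinatorial claim: one needs to push any hypothetical founder relation inside $(a',b')$ outward through $a'$ or $b'$ to a founder relation in $\l|_{[A,B]}$, and this requires splitting on which of the two clauses of the founder definition is satisfied by $a'$ (resp.\ $b'$); once the claim is in place the corollary is an immediate application of \cref{eqd-2}.
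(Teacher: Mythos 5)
Your proposal is correct and follows essentially the same route as the paper: define the minimal interval $[a',b']$ (the paper calls it $[c,d]$) between founders of $\l|_{[A,B]}$ that contains $[a,b]$, verify that $\F(\l|_{[a',b']})=\{a',b'\}$, and apply \cref{eqd-2}. The paper phrases the combinatorial step slightly differently --- it first deduces that $a'\l_0 i$ and $b'\l_0 i$ for all $i\in(a',b')$ and reads off the founder claim from that --- whereas you argue directly by contradiction, pushing a hypothetical internal founder outward through $a'$ or $b'$; the content is the same.
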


\begin{proof}  Write $\l\,_0:=\l|_{[A,B]}$.
Define $[c,d]$ to be the minimal interval between founders of $\l\,_0$ that
contains $[a,b]$.  I.e.\ let $c:=\max\{\mathcal{F}(\l\,_0)\cap[A,a-1]\}$ and
$d:=\min\{\mathcal{F}(\l\,_0)\cap[b+1,B]\}$ (which must be finite because the
endpoints $A,B$ are founders of any order on $[A,B]$).  Then
$\mathcal{F}(\l\,_0)\cap[c,d]=\{c,d\}$, which implies that $i\g_0 c$ and
$i\g_0 d$ for all $i\in(c,d)$, and thus $\mathcal{F}(\l|_{[c,d]})=\{c,d\}$.
Now we can apply \cref{eqd-2} to obtain $X|_{[c,d]}\eqd X'|_{[c,d]}$, whence
$X|_{[a,b]}\eqd X'|_{[a,b]}$.
\end{proof}

We can now prove the main result of this section.

\begin{proof}[Proof of \cref{{limit-col}}]
Let $\epsilon>0$.  By \cref{no-found} with weight $1$, choose $M=M(m)>m$
sufficiently large that a uniformly random order on $[-M,M]$ has no founders
in $[-m,m]$ with probability at least $1-\epsilon$.  Now, by
\cref{limit-unif}, choose $n=n(M)>M$ sufficiently large that for both set of
parameters $(q,w)$ and $(q,w^*)$, the restriction of the WI order on $[-n,n]$
to $[-M,M]$ differs from the uniform total order on $[-M,M]$ by at most
$\epsilon$ in total variation.

Let $(X,\L)$ and $(X^*,\L^*)$ be WI models on $[-n,n]$ with respective
parameters $(q,w)$ and $(q,w^*)$.  We will couple them in such a way that
their colorings agree on $[-m,m]$ with high probability.  First, by the
choice of $n$, couple $\L$ and $\L^*$ so that, conditional on some event $E$
of probability at least $1-\epsilon$, we have that
$\L|_{[-M,M]}=\L^*|_{[-M,M]}$, and this restriction is conditionally
uniformly random.  By the choice of $M$, on some further event $E'\subseteq
E$ with $\P(E'\mid E)\geq 1-\epsilon$ (and thus $\P(E')\geq (1-\epsilon)^2$),
the restriction $\L|_{[-M,M]}$ has no founders in $[-m,m]$.  Now, by
\cref{eqd-3}, we can couple $X$ and $X^*$ (with the correct conditional laws
given $\L$ and $\L^*$) so that on $E'$ we have $X|_{[-m,m]}\eqd
X^*|_{[-m,m]}$.

However, the consistency property in \cref{col-cons} implies that
$X^*|_{[-m,m]}$ is equal in law to the WI coloring with parameters $(q,w^*)$
on $[-m,m]$, for all $n$.
\end{proof}

\section{Proof of main result}

\begin{proof}[Proof of \cref{main}]
Let $q=4$, let $(U_i,\phi_i)_{i\in \Z}$ be as in \cref{finitary}, and let
$X=(X_i)_{i\in\Z}$ be the solution to \eqref{system}. By \cref{finitary}, it
suffices to show that $(X_i)_{i\in\Z}$ is equal in law to the $1$-dependent
$4$-coloring constructed in \cite{hl}. By \cref{col-cons,kdep}, this will
follow if we can show that its restriction to the interval $[-m,m]$ is equal
in law to the WI model with weight $w^*(4)=3/2$, for every $m$.

Let $\L$ be the total order on $\Z$ induced by $(U_i)_{i\in\Z}$; i.e.\ let
$i\L j$ if and only if $U(i)<U(j)$.  Let $n\geq 1$ be an integer, let $\L^n$
be the restriction $\L|_{[-n,n]}$, and let define the neighbor maps
$L^n=L_{\L^n}$ and $R^n=R_{\L^n}$ via \eqref{local-nbrs}, so that $L^n(i)$ or
$R^n(i)$ is infinite when $i$ is a founder of $\L^n$.  Now define a coloring
$X^n=(X_{-n},\ldots,X_n)$ to be the solution of \eqref{system}, except
restricted to $i\in[-n,n]$, and using $L^n,R^n$ in place of $L,R$.  We take
$X_\infty:=\infty$, so that when $L^n(i)$ or $R^n(i)$ is infinite, the
restriction involving $X_{L^n(i)}$ or $X_{R^n(i)}$ in \eqref{system} is
ignored. Existence and uniqueness of the solution is clear by considering the
integers $i\in[-m,m]$ in the order $\L$.

Since the preference orders $\phi_i$ are uniformly random, $X^n$ is equal in
law to the WI coloring on $[-n,n]$ with $4$ colors and weight $w=1$.  On the
other hand, let $n>m$, and let $G_n$ be the event that there exist lucky integers
$A,B$ with $[-m,m]\subseteq[A,B]\subseteq [-n,n]$ and $i\G A,B$ for all $i\in
(A,B)$. Then by the argument in the proof of \cref{finitary}, $X^n|_{[-m,m]}$
and $X|_{[-m,m]}$ are equal on $G_n$ (where $X=(X_i)_{i\in\Z}$ is the global
solution to \eqref{system} mentioned earlier).  \cref{lucky} implies that
a.s.\ $G_n$ occurs eventually as $n\to\infty$. Therefore $X^n|_{[-m,m]}\to
X|_{[-m,m]}$ a.s.  But \cref{limit-col} states that $X^n|_{[-m,m]}$ converges
in law to the WI coloring $Y$ with weight $w^*$ on $[-m,m]$, so
$X|_{[-m,m]}\eqd Y$ as required.
\end{proof}

\bibliographystyle{abbrv}
\bibliography{fin}

\begin{thebibliography}{1}

\bibitem{aaronson-gilat-keane-devalk}
J.~Aaronson, D.~Gilat, M.~Keane, and V.~de~Valk.
\newblock An algebraic construction of a class of one-dependent processes.
\newblock {\em Ann. Probab.}, 17(1):128--143, 1989.

\bibitem{adding}
A.~Borodin, P.~Diaconis, and J.~Fulman.
\newblock On adding a list of numbers (and other one-dependent determinantal
  processes).
\newblock {\em Bull. Amer. Math. Soc. (N.S.)}, 47(4):639--670, 2010.

\bibitem{durrett}
R.~Durrett.
\newblock {\em Probability: theory and examples}.
\newblock Cambridge Series in Statistical and Probabilistic Mathematics.
  Cambridge University Press, Cambridge, fourth edition, 2010.

\bibitem{hl}
A.~E. Holroyd and T.~M. Liggett.
\newblock Finitely dependent coloring.
\newblock 2014.
\newblock arXiv:1403.2448.

\bibitem{hl2}
A.~E. Holroyd and T.~M. Liggett.
\newblock Symmetric 1-dependent colorings of the integers.
\newblock 2014.
\newblock arXiv:1407.4514.

\bibitem{hsw}
A.~E. Holroyd, O.~Schramm, and D.~B. Wilson.
\newblock Finitary coloring.
\newblock 2008.
\newblock Manuscript. Final version in preparation.

\bibitem{janson-84}
S.~Janson.
\newblock Runs in {$m$}-dependent sequences.
\newblock {\em Ann. Probab.}, 12(3):805--818, 1984.

\bibitem{linial}
N.~Linial.
\newblock Distributive graph algorithms -- global solutions from local data.
\newblock In {\em 28th Annual Symposium on Foundations of Computer Science},
  pages 331--335. IEEE, 1987.

\bibitem{naor}
M.~Naor.
\newblock A lower bound on probabilistic algorithms for distributive ring
  coloring.
\newblock {\em SIAM J. Discrete Math.}, 4(3):409--412, 1991.

\end{thebibliography}

\end{document}